\newtheorem{theorem}{Theorem}[section]
\newtheorem{corollary}[theorem]{Corollary}
\newtheorem{lemma}[theorem]{Lemma}
\theoremstyle{remark}
\newtheorem{rmk}{Remark}
\numberwithin{equation}{section}
\def\neweq#1{\begin{equation}\label{#1}}
\def\endeq{\end{equation}}
\def\eps{\varepsilon}
\def\R{{\mathbb R} }
\def\r{\R^{n+1}_{+}}
\def\rn{\R^{n}}
\def\br{\partial\r}
\def\super{\overline}
\begin{document}

\title{On the fractional Lane-Emden equation}

\author[J. D\'avila]{Juan D\'avila}

\address{Departamento de Ingenier\'ia Matem\'atica and CMM, Universidad de Chile, Casilla 170 Correo 3, Santiago, Chile}

\email{jdavila@dim.uchile.cn}

\author[L. Dupaigne]{Louis Dupaigne}

\address{Institut Camille Jordan, UMR CNRS 5208, Universit\'e Claude Bernard Lyon 1, 43 boulevard du 11 novembre 1918, 69622 Villeurbanne cedex, France}

\email{louis.dupaigne@math.cnrs.fr}

\author[J. Wei]{Juncheng Wei}

\address{Department of Mathematics, University of British Columbia, Vancouver, B.C., Canada, V6T 1Z2. }

\email{jcwei@math.ubc.ca}

\thanks{}

\keywords{}

\begin{abstract}We classify solutions of finite Morse index of the fractional Lane-Emden equation 
$$
(-\Delta)^{s} u = \vert u\vert^{p-1}u\quad\text{in $\R^{n}$.}
$$
\end{abstract}

\maketitle

\section{Introduction}
Fix an integer $n\ge 1$ and let $p_{S}(n)$ denote the classical Sobolev exponent:
$$
p_{S}(n)=\left\{
\begin{aligned}
+\infty&\quad\text{if $n\le 2$}\\
\frac{n+2}{n-2}&\quad\text{if $n\ge 3$}
\end{aligned}
\right.
$$
A celebrated result of Gidas and Spruck \cite{gs} asserts that there is no positive solution to the Lane-Emden equation
\begin{equation} \label{le}
-\Delta u = \vert u\vert^{p-1}u\qquad\text{in $\R^n$},
\end{equation}
whenever  $p\in (1,p_{S}(n))$.  For $p=p_{S}(n)$, the same equation is known to have (up to translation and rescaling) a unique positive solution, which is radial and explicit (see Caffarelli-Gidas-Spruck \cite{cgs}).
Let now $p_{c}(n)>p_{S}(n)$ denote the Joseph-Lundgren exponent:
$$
p_{c}(n)=\left\{
\begin{aligned}
+\infty&\quad\text{if $n\le 10$}\\
\frac{(n-2)^2-{4n}+8\sqrt{n-1} }{(n-2)(n-10)}&\quad\text{if $n\ge 11$}
\end{aligned}
\right.
$$
This exponent can be characterized as follows: for $p\ge p_{S}(n)$, the explicit singular solution $u_{s}(x)=A\vert x\vert^{-\frac2{p-1}}$ is unstable if and only if $p<p_c(n)$.
It was proved by  Farina \cite{f} that \eqref{le} has no nontrivial finite Morse index solution whenever $1<p<p_{c}(n)$, $p\neq p_{S}(n)$.

Through blow-up analysis, such Liouville-type theorems imply interior regularity for solutions of a large class of semilinear elliptic equations: they are known to be equivalent to universal estimates for solutions of
\begin{equation} \label{general}
-Lu = f(x,u,\nabla u)\quad\text{in $\Omega$,}
\end{equation}
where $L$ is a uniformly elliptic operator with smooth coefficients, the nonlinearity $f$ scales like $\vert u\vert^{p-1}u$ for large values of $u$, and $\Omega$ is an open set of $\R^n$. For precise statements, see the work of Polacik, Quittner and Souplet \cite{P-Q-S} in the subcritical setting, as well as its adaptation to the supercritical case by Farina and two of the authors \cite{ddf}.

In the present work, we are interested in understanding whether similar results hold for equations involving a nonlocal diffusion operator, the simplest of which is perhaps the fractional laplacian.
Given $s\in(0,1)$, the fractional version of the Lane-Emden equation\footnote{Unlike local diffusion operators, local elliptic regularity for equations of the form \eqref{general} where this time $L$ is the generator of a general Markov diffusion, cannot be captured from the sole understanding of the fractional Lane-Emden equation. For example, further investigations will be needed for operators of L\'evy symbol $\psi(\xi)=\int_{S^{n-1}}\vert\omega\cdot\xi\vert^{2s}\mu(d\omega)$, where $\mu$ is any finite symmetric measure on the sphere $S^{n-1}$.} reads
\begin{equation} \label{p}
(-\Delta)^{s} u = \vert u\vert^{p-1}u\quad\text{in $\R^{n}$.}
\end{equation}
Here we have assumed that $u\in C^{2\sigma}(\R^n)$, $\sigma>s$ and
\begin{equation} \label{tech}
\int_{\R^n}\frac{\vert u(y)\vert}{(1+\vert y\vert)^{n+2s}}\;dy<+\infty,
\end{equation}
so that the fractional laplacian of $u$
$$
(-\Delta)^s u(x) := \mathcal A_{n,s}\int_{\R^n}\frac{u(x)-u(y)}{\vert x-y\vert^{n+2s}}\;dy
$$
is well-defined (in the principal-value sense) at every point $x\in\R^n$.
The normalizing constant $\mathcal A_{n,s} = \frac{2^{2s-1}}{\pi^{n/2}} \frac{\Gamma\left(\frac{n+2s}{2}\right)}{\vert\Gamma(-s)\vert}$ is of the order of $s(1-s)$ as $s$ converges to $0$ or $1$.

The aforementioned classification results of Gidas-Spruck and Caffarelli-Gidas-Spruck have been generalized to the fractional setting (see Y. Li \cite{yy} and Chen-Li-Ou \cite{cho}). The corresponding fractional Sobolev exponent is given by
$$
p_{S}(n)=\left\{
\begin{aligned}
+\infty&\quad\text{if $n\le 2s$}\\
\frac{n+2s}{n-2s}&\quad\text{if $n> 2s$}
\end{aligned}
\right.
$$
Our main result is the following Liouville-type theorem for the fractional Lane-Emden equation.

\begin{theorem}
\label{thmstable} 
Assume that $n\ge1$ and $0<s<\sigma<1$.
Let $u\in C^{2\sigma}(\R^n)\cap L^1(\R^n,(1+\vert y\vert)^{n+2s}dy)$ be a solution to (\ref{p})  which is stable outside a compact set i.e. there exists $R_0\ge0$ such that for every $\varphi\in C^1_{c}(\R^n\setminus \overline{B_{R_{0}}})$,
\begin{equation} \label{stability}
p\int_{\R^n}\vert u\vert^{p-1}\varphi^2\;dx\le \| \varphi \|_{\dot H^s(\R^n)}^2.
\end{equation}

\begin{itemize}
\item If $1<p<p_{S}(n)$ or if $p_{S}(n)<p$ and
\begin{equation} \label{cond}
p \frac{\Gamma(\frac n2-\frac{s}{p-1}) \Gamma(s+\frac{s}{p-1})}{\Gamma(\frac{s}{p-1}) \Gamma(\frac{n-2s}{2} - \frac{s}{p-1})} > \frac{\Gamma(\frac{n+2s}{4})^2}{\Gamma(\frac{n-2s}{4})^2},
\end{equation}
then $ u \equiv 0 $;
\item If $p=p_{S}(n)$, then $u$ has finite energy i.e.
\[\| u\|_{\dot H^s(\R^n)}^2 =\int_{\mathbb R^{n}}|u|^{p+1}<+\infty.\]
If in addition $u$ is stable, then in fact $u\equiv 0$.
\end{itemize}
\end{theorem}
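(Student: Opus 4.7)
My plan follows Farina's strategy in the local case \cite{f}, lifted to the fractional setting via the Caffarelli-Silvestre extension: writing $U\colon\r\to\R$ for the $s$-harmonic extension of $u$, the problem becomes $\mathrm{div}(y^{1-2s}\nabla U)=0$ in $\r$ with $U(\cdot,0)=u$ and $-\kappa_s\lim_{y\to 0^+}y^{1-2s}\partial_y U=|u|^{p-1}u$ for an appropriate constant $\kappa_s$. This recasts \eqref{p} as a degenerate local reaction-boundary problem in which the fractional Dirichlet form $\|\cdot\|_{\dot H^s}^2$ admits an integration-by-parts formula and the stability inequality becomes a genuine quadratic-form bound.

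For the first item (subcritical range and non-critical supercritical range), the key step is to plug $\varphi=|u|^{\gamma-1}u\,\eta$ for a parameter $\gamma>1/2$ and a cutoff $\eta\in C^\infty_c(\R^n\setminus\overline{B_{R_0}})$ into \eqref{stability}, and to simultaneously multiply \eqref{p} by $|u|^{2\gamma-1}u\,\eta^{2}$ and integrate. Using a nonlocal Stroock-Varopoulos-type inequality to compare the Dirichlet forms of $|u|^{\gamma-1}u\,\eta$ and $|u|^{\gamma}u\,\eta$, together with Cauchy-Schwarz for the cross terms, I expect to arrive at an estimate of the form
\[
\bigl(p-C(\gamma,n,s)\bigr)\int_{\R^n}|u|^{p+2\gamma}\eta^{2}\,dx\ \le\ C\int_{\R^n}|u|^{p+2\gamma}\Phi[\eta]\,dx,
\]
where $\Phi[\eta]$ is a cutoff-dependent quantity that vanishes as the support of $\eta$ is pushed to infinity, and $C(\gamma,n,s)$ is an explicit ratio of Gamma functions. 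Minimizing $C(\gamma,n,s)/p$ over admissible $\gamma>1/2$ reproduces precisely the right-hand side of \eqref{cond}, so \eqref{cond} is exactly the condition allowing a choice of $\gamma$ with $p>C(\gamma,n,s)$; the subcritical condition $p<p_S(n)$ implies \eqref{cond} automatically. Scaling $\eta=\eta_1(\cdot/R)$ and letting $R\to\infty$ forces $\int_{\R^n}|u|^{p+2\gamma}<\infty$, and then a further cutoff argument (or unique continuation for the fractional Laplacian applied to $u$ vanishing outside a large ball) yields $u\equiv 0$.

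For the critical case $p=p_S(n)$, condition \eqref{cond} degenerates to equality and the preceding argument no longer closes. Instead I would derive a Pohozaev identity on half-balls $B_R^+\subset\r$ for the extension $U$ and combine it with a monotonicity formula for a suitably rescaled energy
\[
E(R)=R^{2s\frac{p+1}{p-1}-n}\Bigl(\tfrac12\int_{B_R^+}y^{1-2s}|\nabla U|^2\,dX-\tfrac{\kappa_s}{p+1}\int_{B_R}|u|^{p+1}\,dx\Bigr)+\text{boundary corrections},
\]
which is nondecreasing in $R$. The borderline version of the Step 1 estimate (valid in the limit) bounds $E(R)$ as $R\to\infty$; monotonicity then gives finite energy $\|u\|_{\dot H^s}^2=\int|u|^{p+1}<\infty$. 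If in addition $u$ is globally stable, the test function $\varphi=u$ becomes admissible in \eqref{stability} by density, yielding $(p-1)\int_{\R^n}|u|^{p+1}\le 0$ and hence $u\equiv 0$ since $p>1$.

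The main obstacle I anticipate is matching sharp constants in Step 1: extracting the Gamma function quotient of \eqref{cond} as the exact infimum of $C(\gamma,n,s)$ requires applying the nonlocal Stroock-Varopoulos inequality without loss, which is delicate because the pointwise algebraic identities available in the local setting are replaced by genuinely bilinear ones on $\R^n\times\R^n$ (or on $\r$ via the extension). Setting up the monotonicity formula in the critical case also requires a careful choice of boundary correction terms so that $E'(R)\ge 0$ holds with a useful positive remainder.
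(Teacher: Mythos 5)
Your treatment of the critical case $p=p_S(n)$ does match the paper's in spirit (finite energy via the test function $u\eta$, a Pohozaev identity following Ros-Oton and Serra, and, under global stability, testing \eqref{stability} with $\varphi=u$). For the first item, however, your route diverges fundamentally from the paper's, and I do not believe it closes.

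You propose to adapt Farina's $\varphi=|u|^{\gamma-1}u\,\eta$ test-function argument directly at the level of the nonlocal operator, compensating for the loss of pointwise gradient identities by a Stroock--Varopoulos-type inequality, and you assert (without computation) that minimizing the resulting $C(\gamma,n,s)$ over $\gamma>1/2$ reproduces \eqref{cond}. This is exactly where the plan breaks. In the local case Farina's sharp constant rests on the \emph{exact} identity $\nabla u\cdot\nabla(|u|^{2\gamma-2}u)=\frac{2\gamma-1}{\gamma^2}\,|\nabla|u|^\gamma|^2$; its nonlocal replacement via Stroock--Varopoulos or C\'ordoba--C\'ordoba is a strict, solution-dependent inequality, so the constant you would extract is not sharp and cannot produce the Gamma-function quotient in \eqref{cond}. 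That quotient is the optimal constant in the fractional Hardy inequality (i.e.\ the instability threshold of the singular solution $u_s$), and the paper reaches it by an entirely different mechanism: the monotonicity formula of Theorem~\ref{mf} for the extension $\bar u$, a blow-down analysis showing that the tangent map $\bar u^\infty$ is a stable \emph{homogeneous} solution of \eqref{xp}, and then the classification of stable homogeneous solutions (Theorem~\ref{h}), where \eqref{cond} emerges from testing the spherical stability inequality with $\varphi=\psi\phi_0/\phi_\alpha$ and comparing $\lambda(\alpha)$ to $\Lambda_{n,s}$. Your plan contains none of the blow-down machinery, which is the actual engine of the supercritical case here (precisely because a Farina-type argument is not known to deliver the sharp exponent in the fractional --- or, for that matter, in the fourth-order local --- setting).

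Two smaller points. First, the remark that "$p<p_S(n)$ implies \eqref{cond} automatically" is not meaningful: for $p<p_S(n)$ the Gamma arguments in \eqref{cond} leave the range where the singular solution is defined; the paper instead handles $1<p<p_S(n)$ by noting that the energy bound $R^{\,n-2s\frac{p+1}{p-1}}$ from Lemma~\ref{lemma energy} vanishes as $R\to\infty$, giving $u\in\dot H^s\cap L^{p+1}$, and then invoking the Pohozaev identity. Second, the monotonicity formula is used in the paper for the supercritical case only; at $p=p_S(n)$ the scaling exponent $n-2s\frac{p+1}{p-1}$ is exactly zero and the energy estimate alone already yields finite energy, with no monotonicity formula needed.
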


\begin{rmk} For $p>p_{S}(n)$, the function
\begin{align}
\label{sing sol}
u_s(x) = A |x|^{-\frac{2s}{p-1}}
\end{align}
where
$$
A^{p-1} = \lambda\left(\frac{n-2s}{2} -\frac{2s}{p-1}\right)
$$
and where
\begin{equation} \label{lofa}
\lambda(\alpha) = 2^{2s} \frac{\Gamma(\frac{n+2s+2\alpha}{4}) \Gamma(\frac{n+2s-2\alpha}{4})}{\Gamma(\frac{n-2s-2\alpha}{4})\Gamma(\frac{n-2s+2\alpha}{4})}
\end{equation}
is a singular solution to \eqref{p} (see the work by Montenegro and two of the authors \cite{davila-dupaigne-montenegro} for the case $s=1/2$, and the work by Fall \cite[Lemma 3.1]{fall} for the general case). In virtue of the following Hardy inequality (due to Herbst \cite{herbst})
$$
\Lambda_{n,s} \int_{\R^n} \frac{\phi^2}{|x|^{2s}}\;dx\leq \| \phi \|_{\dot H^s(\R^n)}^2
$$
with optimal constant given by
$$
\Lambda_{n,s} = 2^{2s}\frac{\Gamma(\frac{n+2s}{4})^2}{\Gamma(\frac{n-2s}{4})^2},
$$
$u_{s}$ is unstable if only if  \eqref{cond} holds. 
Let us also mention that regular radial solutions in the case $s=1/2$ were constructed by Chipot, Chlebik ad Shafrir \cite{chipot-chlebik-shafrir}. 
Recently, Harada \cite{h}  proved that for $s=1/2$, condition \eqref{cond} is the dividing line for the asymptotic behavior of radial solutions to \eqref{p}, extending thereby the classical results of Joseph and Lundgren \cite{jl} to the fractional Lane-Emden equation in the case $s=1/2$. A similar technique as in \cite{chipot-chlebik-shafrir} allows us to show that the condition \eqref{cond} is optimal. Indeed we have:
\end{rmk}

\begin{theorem}
\label{thm:stable sol}
Assume $p>p_{S}(n)$ and that \eqref{cond} fails.
Then there are radial smooth solutions $u>0$ with $u(r)\to0$ as $r\to\infty$ of \eqref{p} that are stable.
\end{theorem}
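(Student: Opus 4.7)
The plan is to exhibit a radial smooth positive solution $u$ of \eqref{p} that is dominated pointwise by the singular solution $u_{s}$; stability will then follow directly from Herbst's Hardy inequality.

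For the construction, I would adapt the shooting/continuation method of Chipot, Chlebik and Shafrir \cite{chipot-chlebik-shafrir} via the Caffarelli-Silvestre extension. Writing $u(x)=U(x,0)$, the function $U$ must solve $\operatorname{div}(y^{1-2s}\nabla U)=0$ in $\R^{n+1}_{+}$ together with the weighted Neumann condition $-\kappa_{n,s}\lim_{y\to 0^{+}}y^{1-2s}\partial_{y}U=U^{p}$ on $\R^{n}$. For radial $U=U(r,y)$ this reduces to a two-dimensional degenerate elliptic boundary value problem on the quarter plane $\{r>0,\ y>0\}$. Parametrizing radial solutions by $\alpha=U(0,0)>0$ and combining Muckenhoupt-weighted regularity theory (Fabes-Kenig-Serapioni) with energy estimates and the explicit barrier provided by the extension $U_{s}$ of $u_{s}$ (see \cite[Lemma 3.1]{fall}), one obtains a smooth radial positive solution $u$ with $0<u\le u_{s}$ on $\R^{n}\setminus\{0\}$. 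In particular, $u(x)\to 0$ as $|x|\to\infty$, which already covers the regularity, positivity and decay assertions of the theorem.

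Stability is then immediate. Using $u^{p-1}\le u_{s}^{p-1}=A^{p-1}|x|^{-2s}$, for every $\varphi\in C^{1}_{c}(\R^{n})$,
\[
p\int_{\R^{n}}u^{p-1}\varphi^{2}\,dx \;\le\; pA^{p-1}\int_{\R^{n}}\frac{\varphi^{2}}{|x|^{2s}}\,dx \;\le\; \frac{pA^{p-1}}{\Lambda_{n,s}}\|\varphi\|_{\dot H^{s}(\R^{n})}^{2},
\]
where the second inequality is Herbst's Hardy inequality, recalled in the preceding remark. A short gamma-function computation using the formulas for $A^{p-1}$ and $\Lambda_{n,s}$ given there shows that the failure of \eqref{cond} is exactly the inequality $pA^{p-1}\le\Lambda_{n,s}$, so \eqref{stability} holds with $R_{0}=0$ and the constructed $u$ is globally stable.

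I expect the main obstacle to be the construction step. In the local case $s=1$ the analogous problem reduces to a classical phase-plane analysis in Emden-Fowler coordinates; for $s\in(0,1)$ the extension is a two-dimensional degenerate elliptic equation rather than an ODE, so maintaining positivity, regularity and the pointwise bound $U\le U_{s}$ throughout the continuation requires delicate weighted energy estimates, the nonlocal strong maximum principle at the boundary $\{y=0\}$, and barrier arguments adapted from \cite{chipot-chlebik-shafrir}. Once such a $u$ is in hand, the comparison and stability steps are essentially formal consequences of the Hardy inequality.
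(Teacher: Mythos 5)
Your stability step is correct and is exactly what the paper does: once you have a positive radial solution $u$ of \eqref{p} dominated pointwise by $u_{s}$, the bound $pu^{p-1}\le pA^{p-1}|x|^{-2s}$ together with Herbst's Hardy inequality and the identity ``$pA^{p-1}\le\Lambda_{n,s}$ iff \eqref{cond} fails'' gives global stability with $R_{0}=0$. No issue there.

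The gap is in the construction, and you have yourself flagged it. The paper does not use a shooting/continuation argument at all. It considers, for $\lambda\in(0,1)$, the extension problem \eqref{prob ball} on $B_{1}\cap\R^{n+1}_{+}$ with Dirichlet datum $\lambda\bar u_{s}$ on the spherical part of the boundary. Since $u_{s}$ is a positive supersolution, the \emph{minimal} solution $u_{\lambda}$ exists, is bounded (Brezis--Cazenave--Martel--Ramiandrisoa truncation), axially symmetric, and radially decreasing (moving planes). The hypothesis that \eqref{cond} fails — i.e.\ that $u_{s}$ is stable — is then used already at this stage, through the Brezis--V\'azquez convexity argument, to show $u_{\lambda}\uparrow u_{s}$ as $\lambda\uparrow 1$, so $m_{j}:=u_{\lambda_{j}}(0)\to\infty$. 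One then rescales $v_{j}(x)=m_{j}^{-1}u_{\lambda_{j}}(x/R_{j})$ with $R_{j}=m_{j}^{(p-1)/(2s)}$ and passes to the limit; minimality plus the exact homogeneity of $u_{s}$ guarantee $v_{j}\le\bar u_{s}$, so the limit $v$ is a bounded entire radial solution with $v\le\bar u_{s}$, $v(0)=1$. In your proposal the failure of \eqref{cond} enters only at the very end, to invoke Hardy; but in the paper it is also needed to make the construction produce a nontrivial entire solution (otherwise $m_{j}$ need not blow up and the rescaling degenerates). Moreover ``parametrizing radial solutions by $\alpha=U(0,0)$'' is an ODE-style shooting description that does not transfer to the two-dimensional degenerate elliptic extension problem without substantial work: there is no initial-value structure, and the crucial comparison $U\le U_{s}$, which you need for the Hardy step, would have to be proven separately, whereas in the paper it falls out of minimality and scaling for free. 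So the proposal identifies the right target and concludes correctly from it, but the existence-and-domination step is asserted rather than proven, and the proposed route (shooting) would require a genuinely different set of tools than the paper's sub/supersolution and blow-up argument.
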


It is by now standard knowledge that the fractional laplacian can be seen as a Dirichlet-to-Neumann operator for a degenerate but {\it local} diffusion operator in the higher-dimensional half-space $\R^{n+1}_{+}$:
\begin{theorem}[\cites{spitzer, mo, caffarelli-silvestre-extension}]\label{extension}
Take $s\in(0,1)$, $\sigma>s$ and $u\in C^{2\sigma}(\R^n)\cap L^1(\R^n,(1+\vert y\vert)^{n+2s}dy)$.
For $X=(x,t)\in\R^{n+1}_{+}$, let
$$
\bar u(X)= \int_{\R^n} P(X,y)u(y)\;dy,
$$
where
$$
P(X,y) = p_{n,s}\, t^{2s}\vert X-y\vert^{-(n+2s)}
$$
and $p_{n,s}$ is chosen so that $\int_{\R^n}P(X,y)\;dy=1$.
Then, $\bar u\in C^2(\r)~\cap~C(\super\r)$, $t^{1-2s}\partial_{t}\bar u\in C(\super\r)$ and
\begin{equation*}
\left\{
\begin{aligned}
\nabla\cdot(t^{1-2s}\nabla \bar u)&=0&\quad\text{in $\R^{n+1}_{+}$,}\\
\bar u&= u&\quad\text{on $\br$,}\\
-\lim_{t\to0} t^{1-2s}\partial_{t}\bar u&= \kappa_s(-\Delta)^s u&\quad\text{on $\br$,}
\end{aligned}
\right.
\end{equation*}
where
\begin{equation} \label{kappas}
\kappa_s = \frac{\Gamma(1-s)}{2^{2s-1} \Gamma(s)}.
\end{equation}
\end{theorem}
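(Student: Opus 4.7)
The plan is to verify the three conclusions (bulk equation in $\r$, boundary trace $\bar u|_{\br}=u$, and weighted Neumann condition) by direct analysis of the Poisson formula $\bar u(X)=\int_{\R^n}P(X,y)u(y)\,dy$, combining explicit computations on the kernel $P$ with dominated convergence arguments that exploit both standing hypotheses on $u$: the local $C^{2\sigma}$ regularity with $\sigma>s$, and the weighted tail bound $\int|u(y)|(1+|y|)^{-(n+2s)}\,dy<\infty$.

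For the bulk equation, I would first verify by a direct differentiation that $P(X,y)=p_{n,s}\,t^{2s}|X-y|^{-(n+2s)}$, viewed as a function of $X\in\r$ with $y$ held fixed on $\br$, satisfies $\nabla_X\!\cdot(t^{1-2s}\nabla_X P)=0$ off the singularity $X=(y,0)$. On compact subsets of $\r$, $P$ and its $X$-derivatives up to second order are uniformly bounded by a multiple of $(1+|y|)^{-(n+2s)}$, so the weighted $L^1$ hypothesis legitimizes differentiation under the integral and transfers the identity to $\bar u$. For the boundary trace, $p_{n,s}$ is chosen so that $\int P(X,y)\,dy=1$; the rescaling $y=x+tz$ turns $P(X,y)\,dy$ into the fixed probability measure $(1+|z|^2)^{-(n+2s)/2}\,dz/\int(1+|z|^2)^{-(n+2s)/2}dz$, and a standard approximate-identity argument—using continuity of $u$ on a small neighborhood of $x$ and the tail bound on its complement—gives $\bar u(X)\to u(x)$ as $X\to(x,0)$.

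The Neumann identity carries the bulk of the work. Differentiating the normalization in $t$ yields $\int\partial_tP(X,y)\,dy=0$, so I would rewrite
\[
t^{1-2s}\partial_t\bar u(x,t)=\int \bigl[t^{1-2s}\partial_tP(X,y)\bigr]\bigl(u(y)-u(x)\bigr)\,dy.
\]
An explicit computation gives
\[
t^{1-2s}\partial_tP=p_{n,s}\Bigl[2s(|x-y|^2+t^2)^{-\tfrac{n+2s}{2}}-(n+2s)t^2(|x-y|^2+t^2)^{-\tfrac{n+2s+2}{2}}\Bigr],
\]
whose first bracket tends pointwise to $2s|x-y|^{-(n+2s)}$ while the second vanishes off the diagonal. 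To promote these pointwise limits to integral convergence against $u(y)-u(x)$, I would dominate the integrand near $y=x$ by $C|y-x|^{2\sigma-n-2s}$—integrable because $\sigma>s$—and away from $x$ by a constant multiple of $(1+|y|)^{-(n+2s)}(|u(y)|+|u(x)|)$. For the second bracket, the rescaling $y=x+tz$ rewrites its contribution as $O(t^{-2s})\int(1+|z|^2)^{-(n+2s+2)/2}(u(x+tz)-u(x))\,dz$; the radial symmetry of the weight kills the $\nabla u(x)$ piece, while the $C^{2\sigma}$ estimate near $x$ combined with the weighted $L^1$ bound away yields an $O(t^{2\sigma-2s})=o(1)$ remainder. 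Matching constants then reduces to computing $p_{n,s}=\Gamma(\tfrac{n+2s}{2})/(\pi^{n/2}\Gamma(s))$ from the Beta-type integral $\int(1+|z|^2)^{-(n+2s)/2}\,dz$ and applying the reflection identity $|\Gamma(-s)|=\Gamma(1-s)/s$, giving an algebraic check against $\kappa_s\mathcal A_{n,s}$.

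The main obstacle is the cancellation underlying the Neumann limit: neither bracket in $t^{1-2s}\partial_tP$ is integrable against $u$ uniformly in $t$, and only after centering by $u(x)$ and combining the short-range Hölder control with the long-range weighted $L^1$ decay does one obtain a dominating function. Once these estimates are in place, continuity of $t^{1-2s}\partial_t\bar u$ up to $\{t=0\}$ and the regularity statement $\bar u\in C^2(\r)\cap C(\overline{\r})$ follow by applying the same estimates uniformly in $x$ on compact subsets of $\br$.
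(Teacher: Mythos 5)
This theorem is cited in the paper from \cite{spitzer}, \cite{mo}, and \cite{caffarelli-silvestre-extension} and not proved there, so there is no internal proof to compare against. Your direct computation on the Poisson kernel --- verifying the weighted harmonicity, the approximate-identity boundary trace, and the Neumann limit after centering by $u(x)$ --- is one of the standard routes and, but for the gap below, would establish the statement as formulated.

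The gap lies in the domination near the diagonal for the first bracket. You propose to bound the integrand near $y=x$ by $C|y-x|^{2\sigma-n-2s}$, relying on $|u(y)-u(x)|\le C|y-x|^{2\sigma}$. That H\"older estimate is valid only when $2\sigma\le 1$. As soon as $\sigma>\tfrac12$ --- which is forced whenever $s\ge\tfrac12$, since $\sigma>s$ --- the function $u$ is $C^1$ and $u(y)-u(x)=\nabla u(x)\cdot(y-x)+O(|y-x|^{2\sigma})$; the linear term produces a contribution of size $|y-x|^{1-n-2s}$, which is not integrable near the diagonal when $s\ge\tfrac12$, so your claimed dominating function is not in $L^1$ and the dominated-convergence step for the first bracket breaks down. (This is also why the paper defines $(-\Delta)^s u$ as a principal value: the integral $\int |x-y|^{-(n+2s)}(u(y)-u(x))\,dy$ is not absolutely convergent in that range.) The repair is exactly the device you already invoke for the second bracket and should be applied uniformly: subtract $\nabla u(x)\cdot(y-x)\chi_{\{|y-x|<1\}}$ from $u(y)-u(x)$, note that it integrates to zero against the radial kernel $(|x-y|^2+t^2)^{-(n+2s)/2}$ for every $t>0$ (and in the limit reproduces the principal value), and dominate the centered residual by $C|y-x|^{2\sigma}$ near the diagonal, which is now integrable against $|y-x|^{-(n+2s)}$ precisely because $\sigma>s$. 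With this symmetrization in both brackets your argument closes for all $s\in(0,1)$ and $\sigma>s$; as written it only covers the case $2\sigma\le 1$, hence essentially $s<\tfrac12$.
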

Applying Theorem \ref{extension} to a solution of the fractional Lane-Emden equation, we end up with the equation
\begin{equation} \label{xp}
\left\{
\begin{aligned}
-\nabla \cdot ( t^{1-2s} \nabla \bar u ) &= 0 &\quad \text{in } \r\\
-\lim_{t\to0} t^{1-2s} \partial_t \bar u &= \kappa_s \vert \bar u\vert^{p-1}\bar u
&\quad \text{on } \br
\end{aligned}
\right.
\end{equation}
An essential ingredient in the proof of Theorem \ref{thmstable} is the following monotonicity formula
\begin{theorem}\label{mf}
Take a solution to \eqref{xp} $\bar u\in C^2(\r)\cap C(\super\r)$ such that $t^{1-2s}\partial_{t}\bar u\in C(\super\r)$. For $x_{0}\in\br$, $\lambda>0$,  let
\begin{multline*}
E(\bar u,x_{0};\lambda) = \\
\lambda^{2s\frac{p+1}{p-1}-n}\left(\frac12\int_{\r\cap B(x_{0},\lambda)} t^{1-2s}\vert\nabla\bar u\vert^2\;dx\,dt - \frac{\kappa_{s}}{p+1}\int_{\br\cap B(x_{0},\lambda)}\vert \bar u\vert^{p+1}\;dx\right)\\
+ \lambda^{2s\frac{p+1}{p-1}-n-1}\frac{s}{p+1}\int_{\partial B(x_{0},\lambda)\cap\r}t^{1-2s}\bar u^2\;d\sigma.
\end{multline*}
Then, $E$ is a nondecreasing function of $\lambda$. Furthermore,
$$
\frac{dE}{d\lambda} = \lambda^{2s\frac{p+1}{p-1}-n+1}\int_{\partial B(x_{0},\lambda)\cap\r}t^{1-2s}\left(\frac{\partial \bar u}{\partial r}+\frac{2s}{p-1}\frac {\bar u}r\right)^2\;d\sigma
$$
\end{theorem}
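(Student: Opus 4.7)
The plan is to exploit the scaling invariance of the extension equation \eqref{xp}. By translation we may assume $x_0=0$; setting $\alpha:=2s/(p-1)$, consider the rescaling $u_\lambda(X):=\lambda^{\alpha}\,\bar u(\lambda X)$. A direct check (using $t_Y=\lambda t_X$ under $Y=\lambda X$) shows that $u_\lambda$ is again a solution of \eqref{xp}: the exponent $\alpha$ is chosen precisely so that both the bulk divergence-form equation and the nonlinear fractional Neumann condition are preserved. Moreover, changing variables $Y=\lambda X$ in each of the three integrals defining the energy yields the identity $E(\bar u,0;\lambda)=E(u_\lambda,0;1)$, so that $dE/d\lambda$ is simply the $\lambda$-derivative of the rescaled functional evaluated at unit radius.

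To compute this derivative I regard $v\mapsto E(v,0;1)$ as an integral functional and take its first variation. Integrating by parts against a test function $\phi$ and using both the interior equation $\nabla\cdot(t^{1-2s}\nabla v)=0$ and the Neumann-type condition $-\lim_{t\to0}t^{1-2s}\partial_t v=\kappa_s|v|^{p-1}v$, the bulk term and the $L^{p+1}$-contribution cancel, leaving a single surface integral on $\partial B_1\cap\r$ of $t^{1-2s}\phi$ times a linear combination of $\partial_r v$ and $v$. Differentiating the scaling gives $\partial_\lambda u_\lambda=\lambda^{-1}(\alpha u_\lambda+X\cdot\nabla u_\lambda)$, which on $\partial B_1$ simplifies to $\lambda^{-1}(\alpha u_\lambda+\partial_r u_\lambda)$. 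Inserting $\phi=\partial_\lambda u_\lambda$ produces a surface integrand which, for the precise coefficient of the surface term appearing in the definition of $E$, collapses to the perfect square $(\partial_r u_\lambda+\alpha u_\lambda)^2$; this delivers both the monotonicity $dE/d\lambda\ge0$ and the structure of the derivative.

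To recover the claimed explicit formula, I unwind the rescaling. On $\partial B(0,\lambda)$ one has $r=\lambda$ and $(\partial_r u_\lambda+\alpha u_\lambda)=\lambda^{\alpha+1}(\partial_r\bar u+\alpha\bar u/r)$, while the weighted surface element transforms as $t_X^{1-2s}\,d\sigma_X=\lambda^{2s-1-n}\,t_Y^{1-2s}\,d\sigma_Y$. Collecting these $\lambda$-factors together with the $\lambda^{-1}$ coming from $\partial_\lambda u_\lambda$ assembles the claimed prefactor in front of $\int_{\partial B(x_0,\lambda)\cap\r}t^{1-2s}\bigl(\partial_r\bar u+\tfrac{2s}{p-1}\tfrac{\bar u}{r}\bigr)^2\,d\sigma$. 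The main obstacle, as I see it, is the rigorous justification of the integration by parts: the weight $t^{1-2s}$ is degenerate or singular at $\{t=0\}$, so one has to approximate $B_1\cap\r$ by $B_1\cap\{t>\varepsilon\}$, apply the divergence theorem there, and pass to the limit $\varepsilon\downarrow 0$. The regularity hypotheses of Theorem \ref{extension} --- in particular $t^{1-2s}\partial_t\bar u\in C(\overline{\r})$ --- are exactly what is needed so that the boundary contribution on $\{t=\varepsilon\}$ converges to the fractional Neumann term $\kappa_s\int_{B_1\cap\br}|u_\lambda|^{p-1}u_\lambda\,\phi\,dx$; once this is secured, the remainder of the argument is a careful but routine tracking of the $\lambda$-powers.
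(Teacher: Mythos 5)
Your proposal follows essentially the same route as the paper: rescale to $U(X;\lambda)=\lambda^{2s/(p-1)}\bar u(\lambda X)$, exploit the scale invariance $E(\bar u,0;\lambda)=E(U,0;1)$, differentiate in $\lambda$, integrate by parts against $\partial_\lambda U$ so that the bulk and nonlinear boundary terms cancel, and then use the identity $\lambda\,\partial_\lambda U = \tfrac{2s}{p-1}U + r\,\partial_r U$ to turn the residual sphere integral into a nonnegative square before scaling back. Your extra remark about justifying the integration by parts on $\{t>\varepsilon\}$ and passing to the limit using $t^{1-2s}\partial_t\bar u\in C(\overline{\R^{n+1}_{+}})$ is a useful precision the paper leaves implicit, but the overall argument coincides with the paper's.
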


\begin{rmk} In the above, $B(x_{0},\lambda)$ denotes the euclidean ball in $\r$ centered at $x_{0}$ of radius $\lambda$, $\sigma$  the $n$-dimensional Hausdorff measure restricted to $\partial B(x_{0},\lambda)$, $r=\vert X\vert$  the euclidean norm of a point $X=(x,t)\in~\r$, and $\partial_{r} = \nabla\cdot\frac{X}r$  the corresponding radial derivative. 
\end{rmk}

An analogous monotonicity formula has been derived  by Fall and Felli \cite{fall-felli} to obtain unique continuation results for fractional equations. Previously, Caffarelli and Silvestre derived an Almgren quotient formula for the fractional laplacian in \cite{caffarelli-silvestre-extension} and Caffarelli, Roquejoffre and Savin \cite{carfarelli-roquejoffre-savin} obtained a related monotonicity formula to study regularity of nonlocal minimal surfaces. Another monotonicity formula for fractional problems was obtained by Cabr\'e and Sire \cite{cabre-sire} and used by Frank, Lenzmann and Silvestre \cite{frank-lenzmann-silvestre}.

The proof of Theorem \ref{thmstable} follows an approach used in our earlier work with Kelei Wang \cite{ddww} (see also \cite{kw}). First we derive suitable energy estimate (Section~\ref{sect:Energy estimates}) and handle the critical and subcrtiicical cases (Section~\ref{sect:The subcritical case}). In Section~\ref{sect:The monotonicity formula} we give a proof of the monotonicity formula  Theorem~\ref{mf}.
Then we use the monotonicity formula  and a blown-down analysis (Section~\ref{sect:Blow-down analysis}) to reduce to homogeneous singular solutions. We exclude the existence of stable homogeneous singular solutions in the optimal range of $p$ (Section~\ref{sect:Homogeneous solutions}). Finally we prove Theorem~\ref{thm:stable sol} in Section~\ref{sect:Construction of radial entire stable solutions}.

\section{Energy estimates}
\label{sect:Energy estimates}
\begin{lemma} \label{cap} Let $u$ be a solution to \eqref{p}. Assume that $u$ is stable outside some ball $B_{R_{0}}\subset\R^n$. Let $\eta\in C^\infty_{c}(\R^n\setminus \super{B_{R_{0}}})$ and for $x\in\R^n$, define
\begin{equation} \label{def rho}
\rho(x) = \int_{\rn}\frac{(\eta(x)-\eta(y))^2}{\vert x-y\vert^{n+2s}}\;dy
\end{equation}
Then,
$$
\int_{\rn}\vert u\vert^{p+1}\eta^{2}\,dx + \frac{1}p\| u\eta \|_{\dot H^{s}(\rn)}^2
\le \frac2{p-1}\int_{\rn}u^2\rho\,dx.
$$
\end{lemma}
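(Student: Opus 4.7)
My plan is to test the equation \eqref{p} against $u\eta^{2}$ and the stability inequality \eqref{stability} against $\varphi=u\eta$, and then to combine the two resulting relations by a short linear-algebra manipulation. The algebraic heart of the argument is the pointwise identity
\[
(u(x)-u(y))\bigl(u(x)\eta(x)^{2}-u(y)\eta(y)^{2}\bigr)=\bigl(u(x)\eta(x)-u(y)\eta(y)\bigr)^{2}-u(x)u(y)\bigl(\eta(x)-\eta(y)\bigr)^{2}.
\]
When inserted into the symmetric double-integral representation of the fractional Laplacian, this identity converts $\int u\eta^{2}(-\Delta)^{s}u\,dx$ into the $\dot H^{s}$-seminorm of $u\eta$ minus a cross term whose kernel is precisely the one defining $\rho$.

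First I multiply the equation by $u\eta^{2}$ and integrate. The integration by parts (via the principal-value formula for $(-\Delta)^{s}u$) is legitimate thanks to the regularity $u\in C^{2\sigma}$, the tail bound \eqref{tech}, and the compact support of $\eta$ away from $\overline{B_{R_{0}}}$, and produces
\[
\int_{\R^{n}}|u|^{p+1}\eta^{2}\,dx=\|u\eta\|_{\dot H^{s}(\R^{n})}^{2}-R,
\]
where, with the Gagliardo normalization matching the one implicit in $\dot H^{s}$,
\[
R=\tfrac{\mathcal A_{n,s}}{2}\iint_{\R^{n}\times\R^{n}}\frac{u(x)u(y)(\eta(x)-\eta(y))^{2}}{|x-y|^{n+2s}}\,dx\,dy.
\]
The elementary inequality $u(x)u(y)\le\tfrac12(u(x)^{2}+u(y)^{2})$ together with the symmetry of the kernel yields $R\le\int_{\R^{n}}u^{2}\rho\,dx$.

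Next I apply \eqref{stability} to $\varphi=u\eta$; this function is compactly supported in $\R^{n}\setminus\overline{B_{R_{0}}}$ and lies in $\dot H^{s}$, so after an $H^{s}$-approximation by $C^{1}_{c}$ test functions it is admissible and gives $p\int|u|^{p+1}\eta^{2}\,dx\le\|u\eta\|_{\dot H^{s}}^{2}$. Setting $I:=\int|u|^{p+1}\eta^{2}\,dx$, $H:=\|u\eta\|_{\dot H^{s}}^{2}$ and $J:=\int u^{2}\rho\,dx$, I have $I=H-R$, $R\le J$ and $pI\le H$. From $pI\le H=I+R$ I get $(p-1)I\le R\le J$, so $I\le J/(p-1)$; from $H\le I+J$ I get $H/p\le(I+J)/p$. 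Adding these two estimates,
\[
I+\tfrac{1}{p}H\le\tfrac{p+1}{p}I+\tfrac{1}{p}J\le\tfrac{p+1}{p(p-1)}J+\tfrac{1}{p}J=\tfrac{2}{p-1}J,
\]
which is the claim.

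The main obstacle is not the algebra, which is essentially forced once one writes down the three relations, but the technical step of justifying the bilinear integration by parts against $u\eta^{2}$ and of plugging $\varphi=u\eta$ into the stability inequality via density. Both are standard consequences of the assumed regularity $u\in C^{2\sigma}$ with $\sigma>s$ and the weighted integrability \eqref{tech}, and should cause no real trouble after a routine truncation and mollification argument.
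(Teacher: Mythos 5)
Your proof is correct and follows essentially the same route as the paper: multiply \eqref{p} by $u\eta^{2}$, expand via the pointwise identity $(u(x)-u(y))(u(x)\eta(x)^{2}-u(y)\eta(y)^{2})=(u(x)\eta(x)-u(y)\eta(y))^{2}-u(x)u(y)(\eta(x)-\eta(y))^{2}$, bound the cross term by $\int u^{2}\rho$ via $2ab\le a^{2}+b^{2}$, invoke stability with $\varphi=u\eta$, and combine. The final linear-algebra step is identical to the paper's (with the bookkeeping $H-I\le J$ and $(p-1)I\le J$), and your remark about absorbing the Gagliardo normalization constant into $\dot H^{s}$ correctly identifies the convention the paper is implicitly using.
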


\proof
Multiply
\eqref{p} by $u\eta^{2}$.
Then,
\begin{align*}
\int_{\rn}\vert u\vert^{p+1}\eta^{2}\;dx &= \int_{\rn} (-\Delta)^{s}u\, u\eta^{2}\;dx\\
&=\int_{\rn}\int_{\rn} \frac{(u(x)-u(y))(u(x)\eta(x)^{2}-u(y)\eta(y)^{2})}{\vert x-y\vert^{n+2s}}\;dx\,dy\\
&=\int_{\rn}\int_{\rn}
\frac{u^2(x)\eta^{2}(x) -u(x)u(y)(\eta^{2}(x)+\eta^{2}(y))+u^2(y)\eta^{2}(y)}{\vert x-y\vert^{n+2s}}\;dx\,dy\\
&=\int_{\rn}\int_{\rn}
\frac{(u(x)\eta(x)-u(y)\eta(y))^{2}-(\eta(x)-\eta(y))^2u(x)u(y)}{\vert x-y\vert^{n+2s}}\;dx\,dy\\
&= \| u\eta\|_{\dot H^{s}(\rn)}^{2} - \int_{\rn}\int_{\rn}\frac{(\eta(x)-\eta(y))^2u(x)u(y)}{\vert x-y\vert^{n+2s}}\;dx\,dy
\end{align*}
Using the inequality $2ab\le a^{2}+ b^{2}$, we deduce that
\begin{equation} \label{t1}
\| u\eta\|_{\dot H^{s}(\rn)}^{2} - \int_{\rn}\vert u\vert^{p+1}\eta^{2}\;dx\le\int_{\rn}u^2\rho\;dx
\end{equation}
Since $u$ is stable, we deduce that
$$
(p-1)\int_{\rn}\vert u\vert^{p+1}\eta^{2}\;dx\le\int_{\rn}u^2\rho\;dx
$$
Going back to \eqref{t1}, it follows that
$$
\frac{1}{p}\| u\eta\|_{\dot H^{s}(\rn)}^{2} + \int_{\rn}\vert u\vert^{p+1}\eta^{2}\;dx\le\frac2{p-1}\int_{\rn}u^2\rho\;dx
$$
\hfill\qed

\begin{lemma} \label{lt} For  $m>n/2$ and $x\in\rn$, let
\begin{equation} \label{def eta}
\eta(x) = (1+\vert x\vert^{2})^{-m/2}\qquad\text{and}\quad\rho(x) = \int_{\rn}\frac{(\eta(x)-\eta(y))^2}{\vert x-y\vert^{n+2s}}\;dy
\end{equation}
Then, there exists a constant $C=C(n,s,m)>0$ such that
\begin{equation} \label{t2}
C^{-1}\left(1+\vert x\vert^2\right)^{-\frac n2-s}\le\rho(x)\le C\left(1+\vert x\vert^2\right)^{-\frac n2-s}.
\end{equation}
\end{lemma}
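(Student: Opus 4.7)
The plan is to establish \eqref{t2} by direct estimation of the integral defining $\rho$, splitting the integration domain according to the position of $y$ relative to $x$, and treating the cases of bounded and large $|x|$ separately. Since $(1+|x|^2)^{-n/2-s}\sim 1$ on compact sets and $\sim|x|^{-n-2s}$ at infinity, the claim reduces to showing that (a) $\rho$ is bounded above and below by positive constants on any fixed compact, and (b) $\rho(x)\sim|x|^{-n-2s}$ as $|x|\to\infty$. Throughout I would use that $\eta$ is smooth and radial, with $\eta(0)=1$, $|\nabla\eta(x)|\lesssim(1+|x|)^{-m-1}$, $|D^2\eta(x)|\lesssim(1+|x|)^{-m-2}$, and, crucially because $m>n/2$, $\eta\in L^2(\rn)$.

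For the upper bound when $|x|\ge 2$, I would decompose $\rn=R_1\cup R_2\cup R_3$ with $R_1=\{|y-x|\le|x|/2\}$, $R_2=\{|y|\le|x|/2\}$, and $R_3$ the remainder (on which both $|y|\ge|x|/2$ and $|y-x|\ge|x|/2$). On $R_1$, $|y|\ge|x|/2$ so the mean-value theorem yields $(\eta(x)-\eta(y))^2\lesssim|x|^{-2m-2}|y-x|^2$; combined with $\int_{|z|\le|x|/2}|z|^{2-n-2s}\,dz\sim|x|^{2-2s}$ this region contributes $\lesssim|x|^{-2m-2s}$. On $R_2$, $|y-x|^{-n-2s}\lesssim|x|^{-n-2s}$, and bounding $(\eta(x)-\eta(y))^2\le 2\eta(x)^2+2\eta(y)^2$ and integrating produces $\lesssim|x|^{-n-2s}\bigl(|x|^n\eta(x)^2+\|\eta\|_{L^2}^2\bigr)\lesssim|x|^{-n-2s}$, using $|x|^n\eta(x)^2\lesssim|x|^{n-2m}\le 1$ for $2m>n$. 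On $R_3$, further splitting by $|y|\le 2|x|$ or $|y|\ge 2|x|$ (in which case $|y-x|\ge|y|/2$) yields $\lesssim|x|^{-2m-2s}$. Summing, $\rho(x)\lesssim|x|^{-n-2s}$. For $|x|\le 2$, an analogous but cruder splitting (using $\|\nabla\eta\|_{L^\infty}$ for $|y-x|\le 1$ and bounding by constants otherwise) gives $\rho(x)\lesssim 1$.

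For the lower bound, it suffices to keep only the contribution from $|y|\le 1$, on which $\eta(y)\ge 2^{-m/2}$. Choosing $R$ depending only on $m$ so that $|x|\ge R$ implies $\eta(x)\le|x|^{-m}\le 2^{-m/2-1}$, one obtains $(\eta(x)-\eta(y))^2\ge c>0$ uniformly on $\{|y|\le 1\}$; combined with $|y-x|\le 2|x|$ this gives $\rho(x)\gtrsim|x|^{-n-2s}$. For $|x|\le R$, $\eta(x)\ge(1+R^2)^{-m/2}>0$, and picking $M=M(R)$ with $\eta(y)\le\eta(x)/2$ for $|y|\ge M$ and integrating over this tail yields a uniform positive lower bound. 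The main obstacle is really just the bookkeeping in region $R_2$ of the upper bound: both $\|\eta\|_{L^2}^2$ and $|x|^n\eta(x)^2\sim|x|^{n-2m}$ must be controlled by a constant, and this is exactly where the hypothesis $m>n/2$ enters essentially.
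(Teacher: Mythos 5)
Your proposal is correct and follows essentially the same strategy as the paper: for the upper bound, decompose the domain into a region near $x$ (handled by the mean-value theorem), a region near the origin where $\eta\in L^2$ (the crucial use of $m>n/2$) is invoked, and a far region where the decay of $\eta$ is used; for the lower bound, restrict the integral to a small ball near the origin and use compactness/continuity for bounded $|x|$. The only difference is cosmetic — the paper splits into annuli $|x-y|<|x|/2$, $|x|/2\le|x-y|\le 2|x|$, $|x-y|>2|x|$ centered at $x$, while you split by $|y-x|$ and $|y|$ together — but the underlying estimates match step for step.
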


\proof
Let us prove the upper bound first.
Since $\rho$ is a continuous function, we may always assume that $\vert x\vert\ge1$. Split the integral
$$
\int_{\R^{n}} \frac{(\eta(x)-\eta(y))^2}{\vert x-y\vert^{n+2s}}\;dy
$$
in the regions $|x-y|< |x|/2$, $|x|/2 \leq |x-y|\leq 2|x|$,  and $|x-y|> 2|x|$.
When $|x-y|\leq |x|/2$,
$$
|\eta(x)-\eta(y)|\leq C (1+|x|^2)^{-\frac{m+1}2} |x-y|.
$$
So,
\begin{align*}
\int_{|x-y|\leq |x|/2 }
\frac{(\eta(x)-\eta(y))^2}{\vert x-y\vert^{n+2s}}\;dy
\leq
C(1+|x|^2)^{-m-1}  \int_{|x-y|\leq |x|/2 } |x-y|^{2 -n-2s}\;dy\\
\leq C(1+|x|^2)^{-m-s}\le C\left(1+\vert x\vert^2\right)^{-\frac n2-s}.
\end{align*}
When $|x|/2 \leq |x-y|\leq 2|x|$,
\begin{align*}
\int_{|x|/2 \leq |x-y|\leq 2|x|}
\frac{(\eta(x)-\eta(y))^2}{\vert x-y\vert^{n+2s}}\;dy
\leq C|x|^{-n-2s} \int_{|y|\leq 2|x|} ( \eta(x)^2 + \eta(y)^2 ) \; dy\\
\leq C|x|^{-n-2s} ( |x|^{-2m + n} + 1 )  \leq C(1+|x|^2)^{-\frac n2-s},
\end{align*}
where we used the assumption $m>\frac n2$.
When $|x-y|> 2|x|$, then $|y|\geq |x|$ and $\eta(y)\leq C (1+|x|^2)^{-m/2}$. Then,
\begin{multline*}
\int_{|x-y| > 2|x| }
\frac{(\eta(x)-\eta(y))^2}{\vert x-y\vert^{n+2s}}\;dy
\leq
C(1+|x|^2)^{-m} \int_{|x-y| > 2|x| }
\frac{1}{\vert x-y\vert^{n+2s}}\;dy
\\\leq C (1+|x|^2)^{-m-s} \leq C(1+|x|^2)^{-\frac n2-s}.
\end{multline*}
Let us turn to the lower bound. Again, we may always assume that $\vert x\vert\ge1$. Then,
$$
\rho(x)\ge \int_{|y|\leq 1/2 }
\frac{(\eta(y)-\eta(x))^2}{\vert x-y\vert^{n+2s}}\;dy\ge \left(\frac{\vert x\vert}{2}\right)^{-(n+2s)}\int_{|y|\leq 1/2 }
{(\eta(y)-2^{-m/2})^2}\;dy
$$
and the estimate follows.
\hfill\qed

\begin{corollary}\label{coro en}  Let $m>n/2$, $\eta$ given by \eqref{def eta}, $R\ge R_{0}\ge1$, $\psi\in C^\infty(\R^n)$ be such that $0\le\psi\le1$, $\psi\equiv0$ on $B_{1}$ and $\psi\equiv1$ on $\R^n\setminus B_{2}$. Let
\begin{equation} \label{def eta R}
\eta_{R}(x) = \eta\left(\frac xR\right)\psi\left(\frac x{R_0}\right)\qquad\text{and}\quad\rho_{R}(x) = \int_{\rn}\frac{(\eta_{R}(x)-\eta_{R}(y))^2}{\vert x-y\vert^{n+2s}}\;dy
\end{equation}
There exists a constant $C=C(n,s,m,R_{0})>0$ such that for all 
$\vert x\vert\ge 3R_{0}$
$$
\rho_{R}(x)\le
C\eta\left(\frac xR\right)^2 \vert x\vert^{-(n+2s)}+
R^{-2s}\rho\left(\frac xR\right)
$$
\end{corollary}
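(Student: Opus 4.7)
The plan is to exploit the identity
\[
\eta_{R}(x) - \eta_{R}(y) = \eta\!\left(\tfrac{x}{R}\right)\bigl(\psi\!\left(\tfrac{x}{R_{0}}\right)-\psi\!\left(\tfrac{y}{R_{0}}\right)\bigr) + \psi\!\left(\tfrac{y}{R_{0}}\right)\bigl(\eta\!\left(\tfrac{x}{R}\right)-\eta\!\left(\tfrac{y}{R}\right)\bigr),
\]
which is a direct verification from the definition of $\eta_R$. Squaring and using $(a+b)^{2}\le 2a^{2}+2b^{2}$ together with $\psi\le 1$ yields
\[
(\eta_{R}(x)-\eta_{R}(y))^{2} \le 2\,\eta\!\left(\tfrac{x}{R}\right)^{2}\bigl(\psi\!\left(\tfrac{x}{R_{0}}\right)-\psi\!\left(\tfrac{y}{R_{0}}\right)\bigr)^{2} + 2\bigl(\eta\!\left(\tfrac{x}{R}\right)-\eta\!\left(\tfrac{y}{R}\right)\bigr)^{2}.
\]
Dividing by $|x-y|^{n+2s}$ and integrating in $y$ splits $\rho_R(x)$ into two terms which I will treat separately.

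For the second term, a change of variables $y=Rz$ gives exactly
\[
\int_{\R^{n}}\frac{(\eta(x/R)-\eta(y/R))^{2}}{|x-y|^{n+2s}}\,dy = R^{-2s}\rho(x/R),
\]
which is the second piece of the claimed bound. For the first term, since $|x|\ge 3R_{0}$ we have $\psi(x/R_{0})=1$, so $\psi(x/R_{0})-\psi(y/R_{0}) = 1-\psi(y/R_{0})$ is supported in $\{|y|\le 2R_{0}\}$. In this region $|x-y| \ge |x| - 2R_{0} \ge |x|/3$, so
\[
\int_{\R^{n}}\frac{(1-\psi(y/R_{0}))^{2}}{|x-y|^{n+2s}}\,dy \le C\,|x|^{-(n+2s)}\,|B_{2R_{0}}| \le C(R_{0})\,|x|^{-(n+2s)},
\]
and multiplying by the factor $\eta(x/R)^{2}$ that sits outside the integral produces the first piece of the claimed bound.

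The only non-routine step is spotting the specific decomposition above: the naive splitting $\eta_{R}(x)-\eta_{R}(y) = (\eta(x/R)-\eta(y/R)) + \eta(y/R)(1-\psi(y/R_{0}))$ would put $\eta(y/R)^{2}$ in front of $|x|^{-(n+2s)}$, which for $|y|\le 2R_{0}$ and large $|x|/R$ need not be controlled by $\eta(x/R)^{2}$. The identity written above avoids this issue by extracting the factor $\eta(x/R)$ from the $\psi$-defect, which is exactly the factor needed in the statement. Once the decomposition is in place, the two integrals are handled by the direct estimates indicated above, and the constant $C=C(n,s,m,R_{0})$ depends on $R_{0}$ only through $|B_{2R_{0}}|$.
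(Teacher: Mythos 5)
Your proposal is correct and follows essentially the same route as the paper: you use the identical algebraic decomposition
$\eta_{R}(x)-\eta_{R}(y)=\eta(x/R)\bigl(\psi(x/R_{0})-\psi(y/R_{0})\bigr)+\psi(y/R_{0})\bigl(\eta(x/R)-\eta(y/R)\bigr)$,
then Young's inequality, $\psi\le 1$, the support observation for $1-\psi(\cdot/R_{0})$ together with $|x-y|\gtrsim|x|$, and the change of variables $y=Rz$ for the second piece — exactly the paper's argument. The side remark about why the alternative splitting $(\eta(x/R)-\eta(y/R))+\eta(y/R)(1-\psi(y/R_{0}))$ would only give the weaker bound $C|x|^{-(n+2s)}$ rather than $C\eta(x/R)^{2}|x|^{-(n+2s)}$ is a useful clarification, but not part of the paper's proof.
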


\proof 
Fix $x$ such that $\vert x\vert\ge R\ge 3R_{0}$. Using the definition of $\eta_{R}$ and Young's inequality, we have
\begin{align*}
\frac12 \rho_{R}(x)&\le   \eta\left(\frac xR\right)^2
\int_{\rn}\frac{\left(\psi\left(\frac x{R_0}\right)-\psi\left(\frac y{R_0}\right)\right)^2}{\vert x-y\vert^{n+2s}}\;dy +
\int_{\rn}\psi\left(\frac y{R_0}\right)^2\frac{(\eta \left(\frac xR\right)-\eta\left(\frac yR\right))^2}{\vert x-y\vert^{n+2s}}\;dy\\
&\le  \eta\left(\frac xR\right)^2
\int_{B_{2R_0}}\frac{1}{\vert x-y\vert^{n+2s}}\;dy +
\int_{\rn}\frac{(\eta \left(\frac xR\right)-\eta\left(\frac yR\right))^2}{\vert x-y\vert^{n+2s}}\;dy\\
&\le  C\eta\left(\frac xR\right)^2 \vert x\vert^{-(n+2s)}+
R^{-2s}\rho\left(\frac xR\right)
\end{align*}
and the result follows.
\qed



\begin{lemma}\label{lemma energy} Let $u$ be a solution of \eqref{p} which is stable outside a ball $B_{R_{0}}$. Take $\rho_{R}$ as in Corollary \ref{coro en} with $m\in(\frac n2,\frac n2+\frac{s(p+1)}2)$. Then, there exists a constant $C=C(n,s,m,p,R_{0})>0$ such that   for all $R\ge 3R_{0}$,
$$
\int_{\rn}u^{2}\rho_{R}\,dx
\le C\left(\int_{B_{3R_{0}}}u^{2}\rho_{R}\,dx +R^{n-2s\frac{p+1}{p-1}}\right).
$$
\end{lemma}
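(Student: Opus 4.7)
The plan is to combine the stability identity from Lemma \ref{cap} with H\"older's inequality applied via the weight $\eta_R$ from Corollary \ref{coro en}, and then absorb the resulting nonlinear term using Young's inequality. The key technical point is that the upper bound $m<\frac{n}{2}+\frac{s(p+1)}{2}$ is precisely what makes a certain weighted integral over $\R^n$ converge.

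First, since $\eta_R$ vanishes on $B_{R_0}$, I would apply Lemma \ref{cap} with $\eta=\eta_R$ to obtain
\[
\int_{\R^n}|u|^{p+1}\eta_R^2\,dx \le \frac{2}{p-1}\int_{\R^n}u^2\rho_R\,dx.
\]
Lemma \ref{cap} is formally stated for $\eta\in C^\infty_c$, whereas $\eta_R$ only decays like $|x|^{-m}$ at infinity; however, since $m>n/2$ and the decay of $\rho_R$ from Corollary \ref{coro en} and Lemma \ref{lt} makes every integral in the proof absolutely convergent, a routine truncation-and-limiting argument justifies the step.

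Next, set $I:=\int_{\R^n}u^2\rho_R$, $A:=\int_{B_{3R_0}}u^2\rho_R$ and split $I=A+\int_{\{|x|\ge 3R_0\}}u^2\rho_R$. To the outer piece, decompose $u^2\rho_R=(u\eta_R^{2/(p+1)})^2\cdot(\rho_R\eta_R^{-4/(p+1)})$ and apply H\"older with conjugate exponents $(p+1)/2$ and $(p+1)/(p-1)$:
\[
\int_{\{|x|\ge 3R_0\}}u^2\rho_R \le \left(\int_{\R^n}|u|^{p+1}\eta_R^2\right)^{\!\frac{2}{p+1}}\!\left(\int_{\{|x|\ge 3R_0\}}\rho_R^{\frac{p+1}{p-1}}\eta_R^{-\frac{4}{p-1}}\right)^{\!\frac{p-1}{p+1}}.
\]
The previous displayed stability bound controls the first factor by $CI^{2/(p+1)}$. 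For the second, on $\{|x|\ge 3R_0\}$ the cutoff reduces to $\eta_R(x)=\eta(x/R)=(1+|x/R|^2)^{-m/2}$, and Corollary \ref{coro en} with Lemma \ref{lt} gives
\[
\rho_R(x)\le C\bigl[\eta(x/R)^2|x|^{-(n+2s)} + R^{-2s}(1+|x/R|^2)^{-n/2-s}\bigr].
\]
The dominant term is the second one: after the change of variables $y=x/R$, and since $\eta_R^{-4/(p-1)}=(1+|x/R|^2)^{2m/(p-1)}$, its contribution to the weight integral is a constant multiple of
\[
R^{n-2s\frac{p+1}{p-1}}\int_{\R^n}(1+|y|^2)^{-\beta}\,dy,\qquad \beta = \frac{(n+2s)(p+1)-4m}{2(p-1)}.
\]
The hypothesis $m<\frac{n}{2}+\frac{s(p+1)}{2}$ is exactly the condition $2\beta>n$ that makes this integral finite. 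The first term $\eta(x/R)^2|x|^{-(n+2s)}$ is handled analogously and yields a strictly smaller $R$-independent remainder bounded by a constant depending only on $R_0$.

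Combining the previous bounds gives
\[
I \le A + C\,I^{\frac{2}{p+1}}\,R^{\frac{n(p-1)}{p+1}-2s},
\]
and Young's inequality (with exponents $(p+1)/2$ and $(p+1)/(p-1)$) applied to the product $I^{2/(p+1)}\cdot CR^{n(p-1)/(p+1)-2s}$ bounds it by $\tfrac{1}{2}I+CR^{n-2s(p+1)/(p-1)}$; moving the $\tfrac12 I$ to the left yields the claimed estimate. The principal obstacle is the exponent bookkeeping in the middle paragraph: one must identify that the convergence of the $y$-integral is governed precisely by the strict upper bound on $m$ imposed in the hypothesis, and verify that the boundary term coming from the $|x|^{-(n+2s)}$ piece of Corollary \ref{coro en} fits inside the same $R^{n-2s(p+1)/(p-1)}$ remainder.
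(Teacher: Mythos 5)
Your proof follows essentially the same route as the paper's: apply Lemma \ref{cap} with $\eta=\eta_R$, Hölder's inequality with the splitting $u^2\rho_R=(u^2\eta_R^{4/(p+1)})(\rho_R\eta_R^{-4/(p+1)})$ on $\{|x|\ge 3R_0\}$, bound the weight integral $\int\rho_R^{(p+1)/(p-1)}\eta_R^{-4/(p-1)}$ by $CR^{n-2s\frac{p+1}{p-1}}$ using Corollary \ref{coro en} and Lemma \ref{lt} (your change of variables $y=x/R$ does in one step what the paper does by splitting into the regions $3R_0\le|x|\le R$ and $|x|\ge R$), and absorb via Young's inequality. The only cosmetic difference is that you make the final Young absorption explicit, whereas the paper compresses it into "and so the result follows"; the exponent bookkeeping, including the identification of $m<\frac n2+\frac{s(p+1)}2$ as the convergence threshold, matches the paper exactly.
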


\proof By Corollary \ref{coro en}, if $R\ge \vert x\vert \ge 3R_{0}$, then
$$
\rho_{R}(x)\le C(\vert x\vert^{-n-2s}+R^{-2s})
$$
and so
$$
\int_{B_{R}\setminus B_{3R_{0}}} \rho_{R}(x)^{\frac{p+1}{p-1}}\eta_{R}(x)^{-\frac{4}{p-1}}\;dx\le CR^{n-2s\frac{p+1}{p-1}}.
$$
Similarly, if $\vert x\vert\ge R\ge 3{R_{0}}$, then
$$
\rho_{R}(x)\le C R^{-2s}\left(1+\frac{\vert x\vert^2}{R^2}\right)^{-\frac n2-s}
$$
and so
$$
\rho_{R}(x)^{\frac{p+1}{p-1}}\eta_{R}(x)^{-\frac{4}{p-1}} \le C R^{-2s\frac{p+1}{p-1}}
\left(1+\frac{\vert x\vert^2}{R^2}\right)^{-\frac{n+2s}2\frac{p+1}{p-1}+\frac{2m}{p-1}}
$$
Since $m\in(\frac n2,\frac n2+s\frac{p+1}2)$, we have $\frac{2m}{p-1}-\frac{n+2s}2\frac{p+1}{p-1}<-\frac n2$ and so
$$
\int_{\R^n\setminus B_{3R_{0}}} \rho_{R}(x)^{\frac{p+1}{p-1}}\eta_{R}(x)^{-\frac{4}{p-1}}\;dx\le CR^{n-2s\frac{p+1}{p-1}}.
$$
Now,
\begin{multline*}
\int_{\rn}u^2\rho_{R}\;dx= \int_{B_{3R_{0}}}u^2\rho_{R}\;dx+
 \int_{\rn\setminus B_{3R_{0}}}u^2\rho_{R}\, \eta_{R}^{-\frac4{p+1}}\, \eta_{R}^{\frac4{p+1}}\,dx\\
\le  \int_{B_{3R_{0}}}u^2\rho_{R}\;dx+ \left(\int_{\rn}\vert u\vert^{p+1}\eta_{R}^{2}\;dx\right)^{\frac2{p+1}}
\left(
\int_{\rn}
\rho_{R}^{\frac{p+1}{p-1}}\eta_{R}^{-\frac4{p-1}}\;dx
\right)^{\frac{p-1}{p+1}}\\
\le  \int_{B_{3R_{0}}}u^2\rho_{R}\;dx+ C R^{(n-2s\frac{p+1}{p-1})\frac{p-1}{p+1}}\left(\int_{\rn}\vert u\vert^{p+1}\eta_{R}^{2}\;dx\right)^{\frac2{p+1}}
\end{multline*}
By a standard approximation argument, Lemma \ref{cap} remains valid with $\eta=\eta_{R}$ and $\rho=\rho_{R}$ and so the result follows.
\hfill\qed

\begin{lemma} \label{l2esti} Assume that $p\ne\frac{n+2s}{n-2s}$. Let $u$ be a solution to \eqref{p} which is stable outside a ball $B_{R_{0}}$ and $\bar u$ its extension, solving \eqref{xp}. Then, there exists a constant $C=C(n,p,s,R_{0},u)>0$ such that
$$
\int_{B_{R}}t^{1-2s}\bar u^2\;dx dt \le C R^{n+2(1-s)-\frac{4s}{p-1} }
$$
for any $R>3R_0$.
\end{lemma}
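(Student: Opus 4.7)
The plan is to combine the Poisson integral representation $\bar u(X)=\int_{\R^n}P(X,y)u(y)\,dy$ from Theorem \ref{extension} with the weighted energy estimate of Lemma \ref{lemma energy}. The two ingredients are: (i) a pointwise upper bound for $\bar u$ in terms of $u^2$ against the Poisson kernel, and (ii) Lemma \ref{lemma energy} to control the $u^2$-integrals that arise.

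First I would use that $\int_{\R^n} P(X,y)\,dy = 1$ to apply Cauchy-Schwarz (or Jensen), yielding the pointwise bound
\[
\bar u(X)^2 \le \int_{\R^n} P(X,y)\, u(y)^2\, dy.
\]
Multiplying by $t^{1-2s}$, integrating over $B_R\cap\r$, and swapping the order of integration gives
\[
\int_{B_R\cap\r} t^{1-2s}\bar u^2\,dX \le \int_{\R^n} u(y)^2 K_R(y)\,dy,\qquad K_R(y) := p_{n,s}\int_{B_R\cap\r}\frac{t}{(|x-y|^2+t^2)^{\frac{n+2s}{2}}}\,dX.
\]
By translating the integration variable to $(y,0)$ and passing to polar coordinates one checks directly that $K_R(y)\le CR^{2-2s}$ whenever $|y|\le 2R$, and that $K_R(y)\le CR^{n+2}/|y|^{n+2s}$ whenever $|y|>2R$ (using $|x-y|^2+t^2\ge|y|^2/4$ on $B_R\cap\r$ in the latter case, together with $\int_{B_R\cap\r} t\,dX = C R^{n+2}$).

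The remaining task is to bound the two resulting integrals over $\R^n$. I would apply Lemma \ref{lemma energy} with $m\in(n/2, n/2+s(p+1)/2)$; together with Lemma \ref{lt} it gives the pointwise lower bounds $\rho_R(y)\ge cR^{-2s}$ on $\{R_0<|y|\le R\}$ and $\rho_R(y)\ge cR^n|y|^{-(n+2s)}$ on $\{|y|>R\}$. These combined with $\int u^2\rho_R\le C(1+R^{n-2s(p+1)/(p-1)})$ yield
\[
\int_{B_{2R}^n} u^2\,dy \le C\bigl(1+R^{n-\tfrac{4s}{p-1}}\bigr), \qquad \int_{|y|>2R}\frac{u^2}{|y|^{n+2s}}\,dy \le C\bigl(R^{-n}+R^{-2s(p+1)/(p-1)}\bigr).
\]
Substituting back into the kernel bound and simplifying, the leading contributions both equal $R^{n+2(1-s)-4s/(p-1)}$, the desired right-hand side.

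The main obstacle is that the Poisson-kernel bound for $|y|\le 2R$ loses a factor $R^{2-2s}$, and the constant piece of $\int u^2\rho_R$ (namely $\int_{B_{3R_0}} u^2\rho_R$, which is $u$-dependent but does not decay as $R\to\infty$) produces subordinate terms of order $R^{2-2s}$ and $R^2$ on the right-hand side. In the supercritical regime $p>p_S(n)$ one has $\alpha := n+2(1-s)-\tfrac{4s}{p-1}>2$, so these lower-order terms are easily absorbed into $CR^\alpha$ for $R\ge 3R_0$; in the subcritical regime ($p<p_S$, hence $\alpha<2$) one uses that $p\ne p_S(n)$ and allows the constant $C=C(n,p,s,R_0,u)$ to absorb the $u$-dependent constant contributions, still producing the claimed bound.
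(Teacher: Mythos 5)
Your proof follows the same route as the paper's: the pointwise Jensen bound $\bar u^2 \le \int P(\cdot,y)\,u^2(y)\,dy$ from the Poisson representation, integration against $t^{1-2s}$ over $B_R\cap\r$, Fubini to get $\int u^2 K_R$, a near-field/far-field split of the kernel, and Lemma~\ref{lemma energy} to close. The one genuine variation is in the near field: you bound $\int_{B_{2R}}u^2$ directly via the pointwise lower bound $\rho_R\gtrsim R^{-2s}$ on $\{3R_0<|y|\le 2R\}$ (analogous to the lower bound of Lemma~\ref{lt}), whereas the paper applies H\"older with weight $\eta_R$ and then Lemma~\ref{cap} to convert $\int |u|^{p+1}\eta_R^2$ into $\int u^2\rho_R$. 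Both produce the same exponent $R^\alpha$; yours is slightly more direct, and the lower bound on $\rho_R$ you invoke is already needed, implicitly, in the paper's own far-field step (the passage to $CR^2\int u^2\rho$), so this is not an additional burden.

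Your closing remark about the subcritical case, however, is not correct as stated. The constant piece $\int_{B_{3R_0}}u^2\rho_R$ from Lemma~\ref{lemma energy}, fed through your kernel bounds, produces terms of size $R^{2-2s}$ (near field) and $R^{2}$ (far field). When $p<p_S(n)$ one has $\alpha=n+2(1-s)-\tfrac{4s}{p-1}<2$, so these terms \emph{exceed} $R^\alpha$ for large $R$ and cannot be absorbed into the constant $C$. (The paper's own proof has the same limitation: the step $\int u^2\rho_R\le CR^{n-2s\frac{p+1}{p-1}}$ requires $p\ge p_S(n)$.) The lemma still holds for $p<p_S(n)$, but only because $u\equiv 0$ there by the subcritical Liouville theorem of Section~\ref{sect:The subcritical case}, which is established independently from Lemma~\ref{lemma energy}. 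Since Lemma~\ref{l2esti} is invoked later only for $p>p_S(n)$, where $\alpha>2$ and your estimates close cleanly, this slip is harmless for the overall argument, but it should not be presented as a valid justification of the subcritical case.
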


\proof

According to Theorem \ref{extension},
$$
\bar u(x,t) = p_{n,s} \int_{\R^n} u(z) \frac{t^{2s}}{(|x-z|^2+t^2)^{\frac{n+2s}2}} \; dz
$$
so that
$$
\bar u(x,t)^2
\leq p_{n,s}
\int_{\R^n} u(z)^2 \frac{t^{2s}}{(|x-z|^2+t^2)^{\frac{n+2s}2}} \; dz .
$$
So,
\begin{multline*}
\int_{B_{R}}t^{1-2s}\bar u^2\;dx dt \le p_{n,s}\int_{\vert x\vert\le R,z\in\R^n}u(z)^2\left(\int_{0}^R  \frac{t}{(|x-z|^2+t^2)^{\frac{n+2s}2}}\; dt\right)dzdx
\\
\le C\int_{\vert x\vert\le R, z\in\R^n}u^2(z) \left\{\left(\vert x-z\vert^2\right)^{-\frac n2+1-s} - \left(\vert x-z\vert^2+R^2\right)^{-\frac n2+1-s}\right\}\;dzdx
\end{multline*}
Split this last integral according to $\vert x-z\vert<2R$ or  $\vert x-z\vert\ge 2R$. Then,
\begin{multline*}
\int_{\vert x\vert\le R, \vert x-z\vert<2R}u^2(z) \left\{\left(\vert x-z\vert^2\right)^{-\frac n2+1-s} - \left(\vert x-z\vert^2+R^2\right)^{-\frac n2+1-s}\right\}\;dzdx\le \\
\int_{\vert x\vert\le R,\vert x-z\vert<2R}u^2(z) \left(\vert x-z\vert^2\right)^{-\frac n2+1-s}\;dzdx\le C R^{2(1-s)}\int_{B_{3R}}u^2(z)\;dz\le\\
CR^{2(1-s)}\left(\int\vert u\vert^{p+1}\eta_{R}^2\right)^{\frac2{p+1}}\left(\int_{B_{3R}}\eta_{R}^{-\frac{4}{p-1}}\right)^{\frac{p-1}{p+1}}\le
\\
C R^{2(1-s)+n\frac{p-1}{p+1}}\left(\int u^2(z)\rho_{R}(z)\;dz\right)^{\frac2{p+1}}\le C R^{n+2(1-s)-\frac{4s}{p-1} },
\end{multline*}
where we used H\"older'sin equality, then Lemma \ref{cap} and then  Lemma \ref{lemma energy}.
For the region $\vert x-z\vert\ge 2R$, the mean-value inequality implies that
\begin{multline*}
\int_{\vert x\vert\le R, \vert x-z\vert\ge 2R}u^2(z) \left\{\left(\vert x-z\vert^2\right)^{-\frac n2+1-s} - \left(\vert x-z\vert^2+R^2\right)^{-\frac n2+1-s}\right\}\;dzdx\le \\
CR^2\int_{\vert x\vert\le R, \vert x-z\vert\ge 2R}u^2(z) \vert x-z\vert^{-(n+2s)}\;dzdx\le CR^{n+2}\int_{\vert z\vert\ge R}u^2(z) \vert z\vert^{-(n+2s)}\;dz\\
\le CR^{2}\int_{\vert z\vert\ge R}u^2\rho\;dz\le C R^{n+2(1-s)-\frac{4s}{p-1} },
\end{multline*}
where we used again Corollary \ref{coro en} in the penultimate inequality and Lemma \ref{lemma energy}  in the last one.
\hfill\qed

\begin{lemma} \label{lj} Let $u$ be a solution to \eqref{p} which is stable outside a ball $B_{R_{0}}$ and $\bar u$ its extension, solving \eqref{xp}. Then, there exists a constant $C=C(n,p,s,u)>0$ such that
$$
\int_{B_{R}\cap\r}t^{1-2s}\vert\nabla \bar u\vert^{2}\;dx\,dt + \int_{B_{R}\cap\br}\vert u\vert^{p+1}\,dx\le C R^{n-2s\frac{p+1}{p-1}}
$$
\end{lemma}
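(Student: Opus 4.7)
The plan is to test the extension equation~\eqref{xp} against $\bar u\,\zeta^2$, where $\zeta\in C^\infty_c(\R^{n+1})$ is a cutoff with $\zeta\equiv 1$ on $B_R$, $\zeta$ supported in $B_{2R}$, and $|\nabla\zeta|\le C/R$. Using $\nabla\cdot(t^{1-2s}\nabla\bar u)=0$ on $\r$ together with the boundary identity $-\lim_{t\to 0}t^{1-2s}\partial_t\bar u=\kappa_s|u|^{p-1}u$ on $\br$, an integration by parts (justified by truncating at $t=\delta$ and sending $\delta\to 0$, thanks to the continuity $t^{1-2s}\partial_t\bar u\in C(\overline{\r})$) yields the identity
$$
\int_{\r}t^{1-2s}|\nabla\bar u|^2\zeta^2\,dX+2\int_{\r}t^{1-2s}\zeta\bar u\,\nabla\bar u\cdot\nabla\zeta\,dX=\kappa_s\int_{\br}|u|^{p+1}\zeta^2\,dx.
$$
The cross term is absorbed via Young's inequality $2|\zeta\bar u\,\nabla\bar u\cdot\nabla\zeta|\le\tfrac12|\nabla\bar u|^2\zeta^2+2\bar u^2|\nabla\zeta|^2$, giving
$$
\int_{B_R\cap\r}t^{1-2s}|\nabla\bar u|^2\,dX\le C\int_{B_{2R}\cap\br}|u|^{p+1}\,dx+\frac{C}{R^2}\int_{B_{2R}\cap\r}t^{1-2s}\bar u^2\,dX.
$$

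Lemma~\ref{l2esti} controls the last term by $CR^{-2}\cdot R^{n+2(1-s)-\frac{4s}{p-1}}=CR^{n-2s\frac{p+1}{p-1}}$, upon noting the algebraic identity $-2s-\frac{4s}{p-1}=-\frac{2s(p+1)}{p-1}$. For the boundary $L^{p+1}$ term, I would apply Lemma~\ref{cap} with $\eta=\eta_R$ (through the approximation argument alluded to there) and combine with Lemma~\ref{lemma energy} to obtain
$$
\int_{\br}|u|^{p+1}\eta_R^2\,dx\le\frac{2}{p-1}\int u^2\rho_R\,dx\le C\,R^{n-2s\frac{p+1}{p-1}},
$$
after absorbing the $R$-independent piece $\int_{B_{3R_0}}u^2\rho_R$ into the constant $C(n,p,s,u)$. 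Since $\eta_R\ge c>0$ uniformly on $B_{2R}\setminus B_{3R_0}$ by the explicit formula~\eqref{def eta R}, this gives the same bound on $\int_{(B_{2R}\setminus B_{3R_0})\cap\br}|u|^{p+1}\,dx$; the remainder $\int_{B_{3R_0}\cap\br}|u|^{p+1}\,dx$ is a fixed quantity depending only on $u$ and $R_0$, which can likewise be absorbed into $C$ upon restricting to $R\ge 3R_0$ and enlarging $C$ if necessary.

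The main technical point is the integration by parts with the degenerate/singular weight $t^{1-2s}$, but this is by now standard in the Caffarelli--Silvestre framework and the stated regularity $\bar u\in C^2(\r)\cap C(\overline{\r})$ together with $t^{1-2s}\partial_t\bar u\in C(\overline{\r})$ supplies exactly the boundary trace needed to carry out the computation. Apart from that, the argument is a direct assembly of the three preceding lemmas (Lemmas~\ref{cap},~\ref{lemma energy}, and~\ref{l2esti}) via a single test-function identity.
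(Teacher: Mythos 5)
Your proof is correct, but it diverges from the paper's argument at one key point: the treatment of the cross term in the Caccioppoli identity. You absorb $2\int t^{1-2s}\zeta\bar u\,\nabla\bar u\cdot\nabla\zeta$ by Young's inequality, which forces you to carry the boundary integral $\int_{\br}|u|^{p+1}\zeta^2$ on the right-hand side of the gradient estimate and to bound it separately via Lemmas~\ref{cap} and~\ref{lemma energy}. The paper instead takes a cutoff $\eta$ that vanishes on $B_{R_0}$ (so that $\bar u\eta$ is an admissible test function for stability outside $B_{R_0}$) and applies the stability inequality \emph{in the extended space} $\r$ with $\phi=\bar u\eta$; combining this with the integration-by-parts identity~\eqref{last} cancels the $L^{p+1}$ boundary term entirely and yields the one-line inequality~\eqref{grad}, namely $\int t^{1-2s}|\nabla(\bar u\eta)|^2\le p'\int t^{1-2s}\bar u^2|\nabla\eta|^2$, to which Lemma~\ref{l2esti} applies directly. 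Both routes land on the same final bound: yours buys economy of hypotheses for the gradient step (no appeal to stability there, just a plain Caccioppoli argument), while the paper's buys a shorter chain of estimates by letting stability do the work. Since the $L^{p+1}$ bound is in any case part of the lemma's conclusion and has to be established, neither approach is strictly cheaper. Your handling of that boundary term — passing through $\eta_R$, observing $\eta_R\ge c>0$ on $B_{2R}\setminus B_{3R_0}$, and absorbing the fixed inner contribution into the constant for $R\ge 3R_0$ — is exactly right, and your bookkeeping of the exponent $n-2s-\frac{4s}{p-1}=n-2s\frac{p+1}{p-1}$ matches.
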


\proof
The $L^{p+1}$ estimate follows from Lemmata \ref{cap} and \ref{lemma energy}.  Now take a cut-off function $\eta\in C^1_{c}(\super\r)$ such that $\eta=1$ on $\r\cap(B_R \setminus B_{2R_{0}})$
and $\eta = 0$ on $B_{R_{0}}\cup (\r\setminus B_{2R})$,
and multiply equation \eqref{xp} by $\bar u \eta^{2}$. Then,
\begin{align}
\kappa_{s}\int_{\br} \vert \bar u\vert^{p+1}\eta^{2}\;dx &= \int_{\r}t^{1-2s}\left\{
\nabla\bar u\cdot\nabla (\bar u\eta^{2})
\right\}\;dx\,dt\nonumber\\
&=\int_{\r}t^{1-2s}\left\{
\vert\nabla (\bar u\eta)\vert^{2} - \bar u^{2}\vert\nabla\eta\vert^{2}
\right\}\;dx\,dt .\label{last}
\end{align}
Since $u$ is stable outside $B_{R_{0}}$, so is $\bar u$ and we deduce that
$$
\frac1p \int_{\r}t^{1-2s}
\vert\nabla (\bar u\eta)\vert^{2} \;dx\,dt\ge \int_{\r}t^{1-2s}\left\{
\vert\nabla (\bar u\eta)\vert^{2} - \bar u^{2}\vert\nabla\eta\vert^{2}
\right\}\;dx\,dt.
$$
In other words,
\begin{align}
\label{grad}
p'\int_{\r}t^{1-2s} \bar u^{2}\vert\nabla\eta\vert^{2}
\;dx\,dt\ge
\int_{\r}t^{1-2s}
\vert\nabla (\bar u\eta)\vert^{2} \;dx\,dt,
\end{align}
where $\frac1{p'}+\frac1p=1$. We then apply Lemma \ref{l2esti}.
\qed

\section{The subcritical case}
\label{sect:The subcritical case}
In this section, we prove Theorem \ref{thmstable} for $1<p\le p_{S}(n)$.

\proof Take a solution $u$ which is stable outside some ball $B_{R_{0}}$. Apply Lemma \ref{lemma energy} and let $R\to+\infty$. Since $p\le p_{S}(n)$, we deduce that $u\in \dot H^s(\R^n)\cap L^{p+1}(\R^n)$. Multiplying the equation \eqref{p} by $u$ and integrating, we deduce that
\begin{equation} \label{eggy}
\int_{\R^n}\vert u\vert^{p+1} = \| u \|_{\dot H^s(\R^n)}^2,
\end{equation}
while multiplying by $u^\lambda$ given for $\lambda>0$ and $x\in\R^n$ by
$$
u^{\lambda}(x) = u(\lambda x)
$$
yields
$$
\int_{\R^n} \vert u \vert^{p-1}u^\lambda = \int_{\R^n} (-\Delta)^{s/2}u(-\Delta)^{s/2}u^\lambda = \lambda^s\int_{\R^n}w w^\lambda,
$$
where $w=(-\Delta)^{s/2}u$. Following Ros-Oton and Serra \cite{rs2}, we use the change of variable $y=\sqrt\lambda\, x$ to deduce that
$$
\lambda^s\int_{\R^n}w w^\lambda \;dx = \lambda^{\frac{2s-n}{2}} \int_{\R^n}w^{\sqrt\lambda}w^{1/\sqrt\lambda}\;dy
$$
Hence,
\begin{multline*}
-\frac n{p+1}\int_{\R^n}\vert u\vert^{p+1} = \int_{\R^n}x\cdot\nabla \frac{\vert u\vert^{p+1}}{p+1}=\int_{\R^n}(\vert u\vert^{p-1}u) x\cdot\nabla u=\\
\left.\frac d{d\lambda}\right\vert_{\lambda=1}\int_{\R^n} \vert u \vert^{p-1}uu^\lambda = \left.\frac d{d\lambda}\right\vert_{\lambda=1}\lambda^{\frac{2s-n}{2}} \int_{\R^n}w^{\sqrt\lambda}w^{1/\sqrt\lambda}\;dy=\\
\frac{2s-n}{2}\int_{\R^n}w^2 + \left.\frac d{d\lambda}\right\vert_{\lambda=1} \int_{\R^n}w^{\sqrt\lambda}w^{1/\sqrt\lambda}\;dy= \frac{2s-n}{2}\| u \|_{\dot H^s(\R^n)}^2
\end{multline*}
In the last equality, we have used the fact that $w\in C^1(\R^n)$, as follows by elliptic regularity. We have just proved the following Pohozaev identity
$$
\frac n{p+1}\int_{\R^n}\vert u\vert^{p+1} = \frac{n-2s}{2}\| u \|_{\dot H^s(\R^n)}^2
$$
For $p<p_{S}(n)$, the above  identity together with \eqref{eggy} force $u\equiv0$. For $p=p_{S}(n)$, we are left with proving that there is no stable nontrivial solution. Since $u\in \dot H^s(\R^n)$, we may apply the stability inequlatiy \eqref{stability} with test function $\varphi=u$, so that
$$
p\int_{\R^n}\vert u\vert^{p+1} \le \| u \|_{\dot H^s(\R^n)}^2 .
$$
This contradicts \eqref{eggy} unless $u\equiv0$.
\hfill\qed

In the following sections, we present several tools to study the supercritical case.
\section{The monotonicity formula}
\label{sect:The monotonicity formula}
In this section, we prove Theorem \ref{mf}.
\proof
Since the equation is invariant under translation, it suffices to consider the case where the center of the considered ball is the origin $x_{0}=0$. Let
\begin{equation} \label{E1}
\begin{aligned}
&E_{1}(\bar u;\lambda) = \\
&\lambda^{2s\frac{p+1}{p-1}-n}\left(\int_{\r\cap B_\lambda} t^{1-2s}\frac{\vert\nabla\bar u\vert^2}{2}dx\,dt - \int_{\br\cap B_\lambda}\frac{\kappa_{s}}{p+1}\vert \bar u\vert^{p+1}dx\right)
\end{aligned}
\end{equation}
For $X\in\r$, let also
\begin{equation} \label{U} U(X;\lambda) = \lambda^{\frac{2s}{p-1}}\bar u(\lambda X).
\end{equation}
Then, $U$ satisfies the three following properties: $U$ solves \eqref{xp},
\begin{equation}\label{t3}
E_{1}(\bar u;\lambda) = E_{1}(U; 1),
\end{equation}
and, using subscripts to denote partial derivatives,
\begin{equation} \label{t4}
\lambda U_{\lambda} = \frac{2s}{p-1} U + rU_{r}.
\end{equation}
Differentiating the right-hand side of \eqref{t3}, we find
$$
\frac{dE_{1}}{d\lambda}(\bar u;\lambda) = \int_{\r\cap B_1}t^{1-2s}\nabla U\cdot\nabla U_{\lambda}\;dx\,dt -\kappa_{s}\int_{\br\cap B_1}\vert U\vert^{p-1}U_{\lambda}\;dx.
$$
Integrating by parts and then using \eqref{t4},
\begin{align*}
\frac{dE_{1}}{d\lambda}(\bar u;\lambda) &= \int_{\partial B_1\cap\r}t^{1-2s} U_{r}U_{\lambda}\;d\sigma\\
&= \lambda \int_{\partial B_1\cap\r}t^{1-2s} U_{\lambda}^2\;d\sigma - \frac{2s}{p-1}\int_{\partial B_1\cap\r}t^{1-2s} UU_{\lambda}\;d\sigma\\
&= \lambda \int_{\partial B_1\cap\r}t^{1-2s} U_{\lambda}^2\;d\sigma - \frac{s}{p-1}\left(\int_{\partial B_1\cap\r}t^{1-2s} U^2\;d\sigma\right)_{\lambda}\\
\end{align*}
Scaling back, the theorem follows.
\hfill\qed

\section{Homogeneous solutions}
\label{sect:Homogeneous solutions}
\begin{theorem}\label{h} Let $\bar u$ be a stable homogeneous solution of \eqref{xp}. Assume that $p>\frac{n+2s}{n-2s}$ and
\begin{equation} \label{jl}
p \frac{\Gamma(\frac n2-\frac{s}{p-1}) \Gamma(s+\frac{s}{p-1})}{\Gamma(\frac{s}{p-1}) \Gamma(\frac{n-2s}{2} - \frac{s}{p-1})} > \frac{\Gamma(\frac{n+2s}{4})^2}{\Gamma(\frac{n-2s}{4})^2}.
\end{equation}
Then, $\bar u\equiv0$.
\end{theorem}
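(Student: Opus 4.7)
The plan is an Emden--Fowler reduction combined with a spherical-harmonic decomposition, exploiting the explicit Gamma-function spectrum of the fractional Laplacian on homogeneous functions. Since $\bar u$ is $L_s$-harmonic in $\r$ and homogeneous of degree $-\alpha$ with $\alpha := 2s/(p-1)$, write $\bar u(X) = |X|^{-\alpha}\bar w(X/|X|)$; restricting to $\br$ gives $u(x) = |x|^{-\alpha}w(\omega)$ with $\omega = x/|x|$ and $w := \bar w|_{S^{n-1}}$. Expanding $w = \sum_{k,\ell} c_{k,\ell} Y_{k,\ell}$ in an $L^2(S^{n-1})$-orthonormal basis of spherical harmonics, the key spectral identity is
\[
(-\Delta)^s\bigl[|x|^{-\beta}Y_{k,\ell}\bigr] = \gamma_{k,s}(\beta)\,|x|^{-\beta-2s}Y_{k,\ell},
\qquad
\gamma_{k,s}(\beta) := 2^{2s}\,\frac{\Gamma\bigl(\tfrac{\beta+k+2s}{2}\bigr)\,\Gamma\bigl(\tfrac{n-\beta+k}{2}\bigr)}{\Gamma\bigl(\tfrac{\beta+k}{2}\bigr)\,\Gamma\bigl(\tfrac{n-\beta-2s+k}{2}\bigr)},
\]
which one checks specializes to $\gamma_{0,s}(\alpha) = A^{p-1}$ (matching the Remark after Theorem \ref{thmstable}) and $\gamma_{0,s}(\mu) = \Lambda_{n,s}$, with $\mu := (n-2s)/2$.

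Next, rewriting $(-\Delta)^s u = |u|^{p-1}u$ in this basis gives the angular equation $\sum c_{k,\ell}\gamma_{k,s}(\alpha) Y_{k,\ell} = |w|^{p-1}w$ on $S^{n-1}$; pairing with $w$ yields the Pohozaev-type identity $\sum c_{k,\ell}^2\gamma_{k,s}(\alpha) = \int_{S^{n-1}}|w|^{p+1}\,d\sigma$. For the stability inequality, I will test \eqref{stability} with $\varphi_R(x) = |x|^{-\mu}\eta(R^{-1}\log|x|)\,w(\omega)$, where $\eta$ is a smooth plateau compactly supported in a bounded interval of $(0,\infty)$, so that $\varphi_R \in C^1_c(\R^n\setminus\overline{B_{R_0}})$ for $R$ large. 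After the Emden--Fowler substitution $\tau = \log|x|$, both sides of \eqref{stability} scale linearly in $R$: a computation using spherical-harmonic orthogonality and the eigenvalue identity above at $\beta = \mu$ shows that $\|\varphi_R\|^2_{\dot H^s}$ has leading coefficient $(\int\eta^2)\sum c_{k,\ell}^2\gamma_{k,s}(\mu)$, while the weighted $L^2$ term has leading coefficient $(\int\eta^2)\int |w|^{p+1}\,d\sigma$; the $\eta'$-corrections are $O(R^{-1})$. Passing to $R\to\infty$ and combining with the Pohozaev identity yields
\[
\sum_{k,\ell} c_{k,\ell}^2\,\bigl[\,p\gamma_{k,s}(\alpha) - \gamma_{k,s}(\mu)\,\bigr] \le 0.
\]

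To conclude, I will show every bracket is strictly positive under \eqref{jl}. Differentiating $\log\gamma_{k,s}(\beta)$ in the index $k$,
\[
\frac{d}{dk}\log\gamma_{k,s}(\beta) = \tfrac{1}{2}\,\bigl[h\bigl(\tfrac{\beta+k}{2}\bigr) + h\bigl(\tfrac{n-2s-\beta+k}{2}\bigr)\bigr],
\qquad h(x) := \psi(x+s) - \psi(x),
\]
with $\psi$ the digamma function. Since $\psi''$ is strictly increasing on $(0,\infty)$, $h$ is strictly convex, and Jensen's inequality applied to the pair $(\alpha+k)/2,\,(n-2s-\alpha+k)/2$, whose midpoint is $(\mu+k)/2$, gives $\tfrac{d}{dk}\log\gamma_{k,s}(\alpha)\ge\tfrac{d}{dk}\log\gamma_{k,s}(\mu)$. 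Hence $k\mapsto\gamma_{k,s}(\alpha)/\gamma_{k,s}(\mu)$ is non-decreasing, so $p\gamma_{k,s}(\alpha) - \gamma_{k,s}(\mu) \ge pA^{p-1} - \Lambda_{n,s} > 0$ by \eqref{jl}. The non-positivity of the sum then forces $c_{k,\ell} = 0$ for all $k,\ell$, so $w\equiv 0$ and $\bar u\equiv 0$.

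The principal obstacle is the Gamma-function monotonicity via convexity of $h$; this is where the fractional case genuinely departs from the local analogue, in which $\gamma_{k,1}(\beta)$ is a quadratic polynomial in $\beta$ and the same monotonicity is immediate. The cutoff limit producing the stability inequality in its spherical-harmonic form is calculation-heavy but standard once one controls the $\eta'$-corrections in $\|\varphi_R\|^2_{\dot H^s(\R^n)}$ as $R\to\infty$.
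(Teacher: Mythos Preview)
Your approach is correct in outline and genuinely different from the paper's. The paper never expands in spherical harmonics on $S^{n-1}$; instead it works entirely in the Caffarelli--Silvestre extension on the half-sphere $S^n_+$. Writing $\bar u = r^{-2s/(p-1)}\psi(\theta)$, it obtains the energy identity \eqref{mult} and the spherical stability inequality \eqref{stab1} by testing with $\phi(X)=r^{-(n-2s)/2}\eta_\epsilon(r)\varphi(\theta)$, where the computation of $\int t^{1-2s}|\nabla\phi|^2$ is a purely \emph{local} gradient calculation that splits cleanly into radial and angular parts --- no Mellin asymptotics are needed. The replacement for your digamma-convexity step is a pointwise maximum-principle comparison $\phi_0\le\phi_\alpha$ on $S^n_+$ between the angular profiles of the extensions of $|x|^{-(n-2s)/2}$ and $|x|^{-2s/(p-1)}$, combined with a ground-state representation (identity \eqref{halpha}) and the specific test function $\varphi=\psi\phi_0/\phi_\alpha$. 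What you gain is a transparent mode-by-mode statement ($p\gamma_{k,s}(\alpha)>\gamma_{k,s}(\mu)$ for every $k$), at the price of the nonlocal asymptotic $\|\,|x|^{-\mu}\eta(R^{-1}\log|x|)Y_{k,\ell}\,\|_{\dot H^s}^2 = R\gamma_{k,s}(\mu)\int\eta^2 + o(R)$, which really requires the Mellin diagonalization of $|x|^{2s}(-\Delta)^s$ on each harmonic sector (Yafaev / Frank--Seiringer); the paper sidesteps this entirely via the extension. One small slip: from the monotonicity of the \emph{ratio} $\gamma_{k,s}(\alpha)/\gamma_{k,s}(\mu)$ you get $p\gamma_{k,s}(\alpha)-\gamma_{k,s}(\mu)=\gamma_{k,s}(\mu)\bigl[p\gamma_{k,s}(\alpha)/\gamma_{k,s}(\mu)-1\bigr]>0$, not the additive inequality $p\gamma_{k,s}(\alpha)-\gamma_{k,s}(\mu)\ge pA^{p-1}-\Lambda_{n,s}$ you wrote; but positivity is all you need.
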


\proof Take standard polar coordinates in $\r$: $X=(x,t)=r\theta$, where $r=\vert X\vert$ and $\theta=\frac X{\vert X\vert}$. Let  $\theta_{1}= \frac t{\vert X\vert}$ denote the component of $\theta$ in the $t$ direction and $S^n_{+}=\{X\in\r : r =1, \, \theta_{1}>0\}$ denote the upper unit half-sphere.

\noindent{\bf Step 1.} Let $\bar u$ be a homogeneous solution of \eqref{xp} i.e. assume that for some $\psi\in C^2(S^n_{+})$,
$$
\bar u (X) = r^{-\frac{2s}{p-1}}\psi(\theta).
$$
Then,
\begin{align}
\label{mult}
\int_{S^n_+} \theta_1^{1-2s} |\nabla \psi|^2
+\beta
\int_{S^n_+} \theta_1^{1-2s} \psi^2
= \kappa_{s} \int_{\partial S^n_+} \vert\psi\vert^{p+1} ,
\end{align}
where $\kappa_{s}$ is given by \eqref{kappas} and
$$
\beta = \frac{2s}{p-1}\left(n-2s-\frac{2s}{p-1}\right).
$$
Indeed, since $\bar u$ solves \eqref{xp} and is homogeneous, $\psi$ solves
\begin{align}
\label{eq psi}
\left\{
\begin{aligned}
&
-{\rm div}(\theta_1^{1-2s} \nabla \psi) + \beta \theta_1^{1-2s} \psi = 0
\quad
\text{on } S^n_+
\\
&
-\lim_{\theta_{1}\to0}\theta_1^{1-2s} \partial_{\theta_1} \psi = \kappa_{s} \vert\psi\vert^{p-1}\psi
\quad \text{on }  \partial S^n_+,
\end{aligned}
\right.
\end{align}
Multiplying \eqref{eq psi} by $\psi$ and integrating, \eqref{mult} follows.

\noindent{\bf Step 2.} For all $\varphi\in C^1(S^n_{+})$,
 \begin{align}
\label{stab1}
\kappa_{s} p \int_{\partial S^n_+} \vert\psi\vert^{p-1} \varphi^2
\leq
\int_{S^n_+} \theta_1^{1-2s} |\nabla \varphi|^2
+\left(\frac{n-2s}{2}\right)^2
\int_{S^n_+} \theta_1^{1-2s} \varphi^2
\end{align}
By definition, $\bar u$ is stable if for all $\phi\in C^1_{c}(\super\r)$,
\begin{equation} \label{stabibi}
\kappa_{s}p\int_{\br}\vert \bar u\vert^{p-1}\phi^2\;dx
\le\int_{\r}t^{1-2s}\vert\nabla\phi\vert^2\;dxdt
\end{equation}
Choose a standard cut-off function $\eta_{\epsilon}\in C^1_{c}(\mathbb R^*_+)$ at the origin and at infinity i.e. $\chi_{(\epsilon,1/\epsilon)}(r)\le\eta_{\epsilon}(r)\le\chi_{(\epsilon/2,2/\epsilon)}(r)$. Let also $\varphi\in C^1(S^n_{+})$, apply \eqref{stabibi} with
$$
\phi(X) = r^{-\frac{n-2s}{2}}\eta_{\epsilon}(r)\varphi(\theta)\qquad\text{for $X\in\r$,}
$$
and let $\epsilon\to0$. Inequality \eqref{stab1} follows.

\noindent{\bf Step 3.}
For  $\alpha \in (0,\frac{n-2s}{2})$, $x\in\R^n\setminus\{0\}$, let
$$
v_{\alpha}(x) = |x|^{-\frac{n-2s}{2}+\alpha}
$$
and $\bar v_{\alpha}$ its extension, as defined in Theorem \ref{extension}. Then, $\bar v_{\alpha}$ is homogeneous i.e. there exists $\phi_{\alpha}\in C^2(S^n_{+})$ such that for $X\in\r\setminus\{0\}$,
$$
\bar v_{\alpha}(X) = r^{-\frac{n-2s}{2}+\alpha}\phi_{\alpha}(\theta).
$$
In addition, for all $\varphi\in C^1(S^n_{+})$,
\begin{multline}
\label{halpha}
\int_{S^n_+} \theta_1^{1-2s} |\nabla \varphi|^2
+\left(\left(\frac{n-2s}2\right)^2 -\alpha^2\right)
\int_{S^n_+} \theta_1^{1-2s} \varphi^2
\\= \kappa_{s} \lambda(\alpha) \int_{\partial S^n_+}
\varphi^2
+\int_{S^n_+} \theta_1^{1-2s} \phi_\alpha^2 \left|\nabla \left(\frac\varphi{\phi_\alpha}\right)\right|^2
\end{multline}
Indeed, according to Fall \cite[Lemma 3.1]{fall}, $\bar v_{\alpha}$ is homogeneous.
Using the calculus identity stated by Fall-Felli in \cite[Lemma 2.1]{fall-felli}, we get
\begin{align}
\label{eq phi alpha}
\left\{
\begin{aligned}
&
-{\rm div}(\theta_1^{1-2s} \nabla \phi_\alpha) + \left(\left(\frac{n-2s}2\right)^2 -\alpha^2\right) \theta_1^{1-2s} \phi_\alpha = 0
\quad
\text{on } S^n_+
\\
& \phi_\alpha = 1
\quad \text{on }  \partial S^n_+.
\end{aligned}
\right.
\end{align}
Multiply equation \eqref{eq phi alpha} by $\varphi^2/\phi_{\alpha}$, integrate by parts, apply the calculus identity
$$
\nabla\phi_{\alpha}\cdot\nabla\frac{\varphi^2}{\phi_{\alpha}} = \vert\nabla \varphi\vert^2 - \left\vert\nabla\frac{\varphi}{\phi_{\alpha}}\right\vert^2\phi_{\alpha}^2
$$
and recall from Fall \cite[Lemma 3.1]{fall} that
$$
-\lim_{t\to0} t^{1-2s} \partial_t \super v_{\alpha} = \kappa_s \lambda(\alpha) |x|^{-\frac{n-2s}{2}+\alpha -2s},
$$
where
$
\lambda(\alpha)
$
 is given by \eqref{lofa}.

\noindent{\bf Step 4.} For $\alpha \in (0,\frac{n-2s}2)$
\begin{align}
\label{ineq phi alpha}
\phi_0 \leq \phi_\alpha
\quad\text{on } S^n_+.
\end{align}
Indeed,  on $ S^n_+$,
$$
{\rm div}(\theta_1^{1-2s} \nabla \phi_0) = \left(\frac{n-2s}2\right)^2\theta_1^{1-2s} \phi_0
\geq  \left(\left(\frac{n-2s}2\right)^2 -\alpha^2\right) \theta_1^{1-2s} \phi_0
$$
so $\phi_0$ is a sub-solution of \eqref{eq phi alpha}. By the maximum principle, the conclusion follows.

\noindent{\bf Step 5.} End of proof. Fix $\alpha \in (0,\frac{n-2s}2)$ given by
$$
\alpha = \frac{n-2s}{2} - \frac{2s}{p-1}
$$
so that
$$
\left(\frac{n-2s}{2}\right)^2 - \alpha^2 = \frac{2s}{p-1}\left(n-2s - \frac{2s}{p-1}\right)=\beta,
$$
where $\beta$ is the constant appearing in \eqref{eq psi}.

Use the stability inequality \eqref{stab1} with $\varphi = \frac{\psi \phi_0}{\phi_\alpha}$:
\begin{equation} \label{staboo}
\kappa_{s} p \int_{\partial S^n_+} \vert\psi\vert^{p+1}
\leq
\int_{S^n_+} \theta_1^{1-2s} \left|\nabla \left(\frac{\psi \phi_0}{\phi_\alpha}\right)\right|^2
+\left(\frac{n-2s}{2}\right)^2
\int_{S^n_+} \theta_1^{1-2s} \left(\frac{\psi \phi_0}{\phi_\alpha}\right)^2.
\end{equation}
Note that a particular case of the identity \eqref{halpha} is
\begin{align}
\label{h1}
\int_{S^n_+} \theta_1^{1-2s} |\nabla \varphi|^2
+\left(\frac{n-2s}{2}\right)^2
\int_{S^n_+} \theta_1^{1-2s} \varphi^2
= \kappa_{s} \Lambda_{n,s}\int_{\partial S^n_+}
\varphi^2
+\int_{S^n_+} \theta_1^{1-2s} \phi_0^2 \left|\nabla \left(\frac\varphi{\phi_0}\right)\right|^2
\end{align}
Using \eqref{h1} (with $\varphi = \frac{\psi \phi_0}{\phi_\alpha}$), \eqref{staboo} becomes
$$
\kappa_{s} p \int_{\partial S^n_+}\vert\psi\vert^{p+1}
\leq
\kappa_{s} \Lambda_{n,s}\int_{\partial S^n_+}
\psi^2
+\int_{S^n_+} \theta_1^{1-2s} \phi_0^2 \left|\nabla \left(\frac\psi{\phi_\alpha}\right)\right|^2 .
$$
By \eqref{ineq phi alpha}, we deduce that
$$
\kappa_{s} p \int_{\partial S^n_+} \vert\psi\vert^{p+1}
\leq
\kappa_{s} \Lambda_{n,s}\int_{\partial S^n_+}
\psi^2
+\int_{S^n_+} \theta_1^{1-2s} \phi_\alpha^2 \left|\nabla \left(\frac\psi{\phi_\alpha}\right)\right|^2 .
$$
Using again the identity \eqref{halpha}, we deduce that
$$
\kappa_{s} p \int_{\partial S^n_+} \vert\psi\vert^{p+1}
\leq
\kappa_{s} (\Lambda_{n,s}-\lambda(\alpha))\int_{\partial S^n_+}
\psi^2
+
\int_{S^n_+} \theta_1^{1-2s} |\nabla \psi|^2
+\beta
\int_{S^n_+} \theta_1^{1-2s} \psi^2
$$
Comparing with \eqref{mult}, it follows that
\begin{align}
\label{ineq1}
(p-1) \int_{\partial S^n_+}  \vert\psi\vert^{p+1}
\leq
 ( \Lambda_{n,s} - \lambda(\alpha)) \int_{\partial S^n_+}
\psi^2 .
\end{align}
But from \eqref{mult} and \eqref{halpha}
$$
\int_{\partial S^n_+} \vert\psi\vert^{p+1}
\geq
\lambda(\alpha) \int_{\partial S^n_+} \psi^2
$$
Combined with \eqref{ineq1}, we find that
$$
\lambda(\alpha) p \leq \Lambda_{n,s}
$$
unless $\psi\equiv0$.
\hfill\qed
\section{Blow-down analysis}
\label{sect:Blow-down analysis}

%

%

\begin{proof}[Proof of Theorem~\ref{thmstable}]

Assume that $p>p_{S}(n)$. Take a solution $u$ of \eqref{p} which is stable outside the ball of radius $R_{0}$ and let $\bar u$ be its extension solving \eqref{xp}.

\noindent {\bf Step 1.}\quad$
\lim_{\lambda\to+\infty} E(\bar u,0;\lambda)<+\infty.
$

Since $E$ is nondecreasing, it suffices to show that $E(\bar u,0;\lambda)$ is bounded.
Write $E=E_{1}+E_{2}$, where $E_{1}$ is given by \eqref{E1} and
$$
E_{2}(\bar u;\lambda) = \lambda^{2s\frac{p+1}{p-1}-n-1}\frac{s}{p+1}\int_{\partial B(0,\lambda)\cap\r}t^{1-2s}\bar u^2\;d\sigma
$$
By Lemma \ref{lj},  $E_{1}$ is bounded. Since $E$ is nondecreasing,
$$
E(\bar u;\lambda ) \le\frac1\lambda \int_{\lambda}^{2\lambda} E(u;t)\;dt \le C +\lambda^{2s\frac{p+1}{p-1}-n-1}\int_{B_{2\lambda}\cap\r}t^{1-2s}\bar u^2.
$$
Applying Lemma \ref{l2esti}, we deduce that $E$ is bounded.

\medskip

\noindent {\bf Step 2.}\quad 
There exists a sequence $\lambda_{i}\to+\infty$ such that $(\bar u^{\lambda_{i}})$ converges weakly in $H^1_{loc}(\r;t^{1-2s}dxdt)$ to a function $\bar u^\infty$.

This follows from the fact that $(\bar u^{\lambda_{i}})$ is bounded in $H^1_{loc}(\r;t^{1-2s}dxdt)$ by Lemma \ref{lj}.

\medskip

\noindent {\bf Step 3.\quad}$\bar u^\infty$ is homogeneous

To see this, apply the scale invariance of $E$, its finiteness and the monotonicity formula: given $R_{2}>R_{1}>0$,
\begin{eqnarray*}
0
&=&\lim\limits_{n\to+\infty}E(\bar u;\lambda_{i} R_{2})-E(\bar u;\lambda_{i} R_{1})\\
&=&\lim\limits_{n\to+\infty}E(\bar u^{\lambda_{i}};R_{2})-E(\bar u^{\lambda_{i}};R_{1})\\
&\geq&\liminf\limits_{n\to+\infty}\int_{(B_{R_{2}}\setminus
B_{R_{1}})\cap\r}t^{1-2s}r^{2-n+\frac{4s}{p-1}}\left(\frac{2s}{p-1}\frac{\bar u^{\lambda_{i}}}{r} +\frac{\partial
\bar u^{\lambda_{i}}}{\partial r}\right)^2\;dx\,dt\\
&\geq&\int_{(B_{R_{2}}\setminus
B_{R_{1}})\cap\r}t^{1-2s}r^{2-n+\frac{4s}{p-1}}\left(\frac{2s}{p-1}\frac{\bar u^\infty}{r} +\frac{\partial
\bar u^\infty}{\partial r}\right)^2\;dx\,dt
\end{eqnarray*}
Note that in the last inequality we only used the weak convergence
of $(\bar u^{\lambda_{i}})$ to $\bar u^\infty$ in $H^1_{loc}(\r;t^{1-2s}dxdt)$. So,
\[\frac{2s}{p-1}\frac{\bar u^\infty}r
+\frac{\partial \bar u^\infty}{\partial
r}=0\quad a.e.~~\text{in}~~\r.\]
And so, $u^\infty$ is homogeneous.

\medskip

\noindent {\bf Step 4.\quad}$\bar u^\infty\equiv0$

Simply apply Theorem \ref{h}.

\medskip

\noindent {\bf Step 5.\quad}$(\bar u^{\lambda_{i}})$ converges strongly to zero in $H^1(B_{R}\setminus B_{\eps};t^{1-2s}dxdt)$ and $(u^{\lambda_{i}})$ converges strongly to zero in $L^{p+1}(B_{R}\setminus B_{\eps})$ for all $R>\epsilon>0$. Indeed, by  Steps 2 and 3, $(\bar u^{\lambda_{i}})$ is bounded in $H^1_{loc}(\r;t^{1-2s}dxdt)$ and converges weakly to $0$. 
It follows that  $(\bar u^{\lambda_{i}})$ converges strongly to $0$ in $L^2_{loc}(\r;t^{1-2s}dxdt)$. Indeed, by the standard Rellich-Kondrachov theorem and a diagonal argument, passing to a subsequence we obtain
$$
\int_{\r \cap (B_R\setminus A)} t^{1-2s} |\bar u^{\lambda_{i}}|^2 \, dx dt\to0 ,
$$
as $i\to \infty$, for any  $B_R = B_R(0) \subset \R^{n+1}$ and $A$ of the form  $A = \{ (x,t)\in\r: 0<t< r/2\}$, where $R,r>0$. 
By \cite[Theorem 1.2]{fks},
$$
\int_{\r\cap B_r(x)}t^{1-2s} |\bar u^{\lambda_{i}}|^2 \, dx dt
\leq 
C r^2 \int_{\r\cap B_r(x)}t^{1-2s} |\nabla \bar u^{\lambda_{i}}|^2 \, dx dt
$$
for any $x\in \br$, $|x|\leq R$, with a uniform constant $C$. Covering $B_R \cap A$ with half balls $B_r(x) \cap \r$, $x\in \br$ with finite overlap, we see that 
$$
\int_{ B_R\cap A} t^{1-2s} |\bar u^{\lambda_{i}}|^2 \, dx dt
\leq  C r^2 \int_{ B_R\cap A} t^{1-2s} |\nabla \bar u^{\lambda_{i}}|^2 \, dx dt \leq C r^2,
$$
and from this we conclude  that  $(\bar u^{\lambda_{i}})$ converges strongly to $0$ in $L^2_{loc}(\r;t^{1-2s}dxdt)$. 

Now, using  \eqref{grad}, $(\bar u^{\lambda_{i}})$ converges strongly to $0$ in $H^1_{loc}(\r\setminus\{0\};t^{1-2s}dxdt)$ and by \eqref{last}, the convergence also holds in  $L^{p+1}_{loc}(\R^n\setminus\{0\})$.
%
%

\medskip

\noindent {\bf Step 6.\quad} $\bar u\equiv0$.

Indeed,
\begin{align*}
E_{1}(\bar u;\lambda) &= E_{1}(\bar u^\lambda;1) =
\int_{\r\cap B_{1}} t^{1-2s}\frac{\vert\nabla\bar u^\lambda\vert^2}{2}dx\,dt - \int_{\br\cap B_{1}}\frac{\kappa_{s}}{p+1}\vert \bar u^\lambda\vert^{p+1}dx\\
&=\int_{\r\cap B_{\epsilon}} t^{1-2s}\frac{\vert\nabla\bar u^\lambda\vert^2}{2}dx\,dt - \int_{\br\cap B_{\epsilon}}\frac{\kappa_{s}}{p+1}\vert \bar u^\lambda\vert^{p+1}dx+\\
&\int_{\r\cap B_{1}\setminus B_{\epsilon}} t^{1-2s}\frac{\vert\nabla\bar u^\lambda\vert^2}{2}dx\,dt - \int_{\br\cap B_{1}\setminus B_{\epsilon}}\frac{\kappa_{s}}{p+1}\vert \bar u^\lambda\vert^{p+1}dx\\
&=\eps^{n-2s\frac{p+1}{p-1}}E_{1}(\bar u;\lambda\eps) + \int_{\r\cap B_{1}\setminus B_{\epsilon}} t^{1-2s}\frac{\vert\nabla\bar u^\lambda\vert^2}{2}dx\,dt - \int_{\br\cap B_{1}\setminus B_{\epsilon}}\frac{\kappa_{s}}{p+1}\vert \bar u^\lambda\vert^{p+1}dx\\
&\le C\eps^{n-2s\frac{p+1}{p-1}} + \int_{\r\cap B_{1}\setminus B_{\epsilon}} t^{1-2s}\frac{\vert\nabla\bar u^\lambda\vert^2}{2}dx\,dt - \int_{\br\cap B_{1}\setminus B_{\epsilon}}\frac{\kappa_{s}}{p+1}\vert \bar u^\lambda\vert^{p+1}dx
\end{align*}
Letting $\lambda\to+\infty$ and then $\eps\to0$, we deduce that
$
\lim_{\lambda\to+\infty}E_{1}(\bar u;\lambda) =0.
$
Using the monotonicity of $E$,
\begin{multline*}
E(\bar u;\lambda) \le \frac1\lambda\int_{\lambda}^{2\lambda}E(t)\;dt\le \sup_{[\lambda,2\lambda]}E_{1} + C\lambda^{-n-1+2s\frac{p+1}{p-1}}\int_{B_{2\lambda}\setminus B_{\lambda}}\bar u^2\\
\end{multline*}
and so
$
\lim_{\lambda\to+\infty}E(\bar u;\lambda) =0.
$ Since $u$ is smooth, we also have $E(\bar u;0)=0$. Since $E$ is monotone, $E\equiv 0$ and so $\bar u$ must be homogeneous, a contradiction unless $\super u\equiv0$.
\end{proof}

\section{Construction of radial entire stable solutions}
\label{sect:Construction of radial entire stable solutions}

Let $\bar u_s$ denote the extension of the singular solution $u_s$ \eqref{sing sol} to $\r$ defined by 
$$
\bar u_s(X)= \int_{\R^n} P(X,y)u(y)\;dy .
$$
Let $B_{1}$ denote the unit ball in $\R^{n+1}$ and for $\lambda \geq 0$, consider
\begin{equation}
\label{prob ball}
\left \{
\begin{aligned}
{\rm div}\, (t^{1-2s} \nabla u)&=0&& 
\text{in } B_1\cap\R^{n+1}_{+}
\\
u&= \lambda \bar u_s&&
\text{on } \partial B_1\cap\R^{n+1}_{+}
\\
- \lim_{t\to0}(t^{1-2s}u_t)& = \kappa_s u^p&&
\text{ on $B_{1}\cap\{t=0\}$}.
\end{aligned}\right. 
\end{equation}
Take $\lambda\in(0,1)$. Since $u_s$ is a positive supersolution of \eqref{prob ball}, there exists a
minimal solution $u=u_\lambda$. By minimality, the family $(u_{\lambda})$ is nondecreasing and $u_\lambda$ is axially symmetric, that is, $u_\lambda(x,t)= u_\lambda(r,t)$ with $r=|x| \in [0,1]$. In addition, for a fixed value $\lambda\in(0,1)$, $u_\lambda$ is bounded, as can be proved by the truncation method of \cite{brezis-cazenave-martel-ramiandrisoa}, see also \cite{davila-handbook} and radially decreasing by the moving plane method (see  \cite{capella-davila-dupaigne-sire} for a similar setting).
From now on let us assume that $p_{S}(n)<p$ and
\begin{equation*}
p \frac{\Gamma(\frac n2-\frac{s}{p-1}) \Gamma(s+\frac{s}{p-1})}{\Gamma(\frac{s}{p-1}) \Gamma(\frac{n-2s}{2} - \frac{s}{p-1})} \leq \frac{\Gamma(\frac{n+2s}{4})^2}{\Gamma(\frac{n-2s}{4})^2},
\end{equation*}
which means that the singular solution $u_s$ is stable. Then, $u_{\lambda}\uparrow u_{s}$ as  $\lambda\uparrow 1$, using the classical convexity argument in \cite{brezis-vazquez} (see also Section 3.2.2 in \cite{dupaigne}).
Let $\lambda_j\uparrow 1$ and 
$$
m_j = \|u_{\lambda_j}\|_{L^\infty} =u_{\lambda_j}(0),
\quad R_j = m_j^{\frac{p-1}{2s}} ,
$$
so that $m_j$, $R_j\to\infty$ as $j\to\infty$.
Set 
$$
v_j(x) = m_j^{-1} u_{\lambda_j}(x/R_j) .
$$
Then $0\leq v_j\leq 1$ is a bounded solution of
\begin{equation*}
\left \{
\begin{aligned}
{\rm div}\, (t^{1-2s} \nabla v_j)&=0&& 
\text{in } B_{R_j}\cap\R^{n+1}_{+}
\\
v_j&= \lambda_j \bar u_s&&
\text{on } \partial B_{R_j} \cap\R^{n+1}_{+}\\
- \lim_{t\to0}(t^{1-2s}(v_j)_t)& = \kappa_s v_j^p&&
\text{ on $B_{R_j}\cap\{t=0\}$}.
\end{aligned}\right. 
\end{equation*}
Moreover $v_j\leq \bar u_s$ in $B_{R_{j}}\cap\r$ and $v_j(0)=1$.
Using elliptic estimates we find (for a subsequence) that $(v_j)$ converges uniformly on compact sets of $\overline \R_+^{n+1}$ to a function $v$ that is axially symmetric and solves
\begin{equation*}
\left \{
\begin{aligned}
{\rm div}\, (t^{1-2s} \nabla v)&=0&& 
\text{in } \r
\\
- \lim_{t\to0}(t^{1-2s}v_t)& = \kappa_s v^p&&
\text{ on }\R^n\times\{0\}.
\end{aligned}\right. 
\end{equation*}
Moreover $0\leq v \leq 1$, $v(0)=1$ and $v\leq \bar u_s$. This $v$ restricted to $\R^n\times\{0\}$ is a radial, bounded, smooth solution of \eqref{p} and from  $v\leq \bar u_s$ we deduce that $v$ is stable.

\bigskip

\noindent
{\bf Acknowledgments:} 
J. D\'avila is
supported by Fondecyt 1130360 and Fondo Basal CMM, Chile.
The research of J. Wei is partially supported by NSERC of Canada. L. Dupaigne is partially supported by ERC grant Epsilon.

\bibliographystyle{amsalpha}

\begin{bibdiv}
\begin{biblist}


\bib{brezis-cazenave-martel-ramiandrisoa}{article}{
   author={Brezis, Ha{\"{\i}}m},
   author={Cazenave, Thierry},
   author={Martel, Yvan},
   author={Ramiandrisoa, Arthur},
   title={Blow up for $u_t-\Delta u=g(u)$ revisited},
   journal={Adv. Differential Equations},
   volume={1},
   date={1996},
   number={1},
   pages={73--90},
   issn={1079-9389},
   review={\MR{1357955 (96i:35063)}},
}

\bib{brezis-vazquez}{article}{
   author={Brezis, Haim},
   author={V{\'a}zquez, Juan Luis},
   title={Blow-up solutions of some nonlinear elliptic problems},
   journal={Rev. Mat. Univ. Complut. Madrid},
   volume={10},
   date={1997},
   number={2},
   pages={443--469},
   issn={0214-3577},
   review={\MR{1605678 (99a:35081)}},
}

\bib{cabre-sire}{article}{
   author={Cabr{\'e}, Xavier},
   author={Sire, Yannick},
   title={Nonlinear equations for fractional Laplacians, I: Regularity,
   maximum principles, and Hamiltonian estimates},
   journal={Ann. Inst. H. Poincar\'e Anal. Non Lin\'eaire},
   volume={31},
   date={2014},
   number={1},
   pages={23--53},
   issn={0294-1449},
   review={\MR{3165278}},
   doi={10.1016/j.anihpc.2013.02.001},
}

\bib{cgs}{article}{
   author={Caffarelli, Luis A.},
   author={Gidas, Basilis},
   author={Spruck, Joel},
   title={Asymptotic symmetry and local behavior of semilinear elliptic
   equations with critical Sobolev growth},
   journal={Comm. Pure Appl. Math.},
   volume={42},
   date={1989},
   number={3},
   pages={271--297},
   issn={0010-3640},
   review={\MR{982351 (90c:35075)}},
   doi={10.1002/cpa.3160420304},
}

\bib{caffarelli-silvestre-extension}{article}{
   author={Caffarelli, Luis},
   author={Silvestre, Luis},
   title={An extension problem related to the fractional Laplacian},
   journal={Comm. Partial Differential Equations},
   volume={32},
   date={2007},
   number={7-9},
   pages={1245--1260},
   issn={0360-5302},
   review={\MR{2354493 (2009k:35096)}},
   doi={10.1080/03605300600987306},
}

\bib{carfarelli-roquejoffre-savin}{article}{
   author={Caffarelli, L.},
   author={Roquejoffre, J.-M.},
   author={Savin, O.},
   title={Nonlocal minimal surfaces},
   journal={Comm. Pure Appl. Math.},
   volume={63},
   date={2010},
   number={9},
   pages={1111--1144},
   issn={0010-3640},
   review={\MR{2675483 (2011h:49057)}},
   doi={10.1002/cpa.20331},
}

\bib{capella-davila-dupaigne-sire}{article}{
   author={Capella, Antonio},
   author={D{\'a}vila, Juan},
   author={Dupaigne, Louis},
   author={Sire, Yannick},
   title={Regularity of radial extremal solutions for some non-local
   semilinear equations},
   journal={Comm. Partial Differential Equations},
   volume={36},
   date={2011},
   number={8},
   pages={1353--1384},
   issn={0360-5302},
   review={\MR{2825595 (2012h:35361)}},
   doi={10.1080/03605302.2011.562954},
}

\bib{cho}{article}{
   author={Chen, Wenxiong},
   author={Li, Congming},
   author={Ou, Biao},
   title={Classification of solutions for an integral equation},
   journal={Comm. Pure Appl. Math.},
   volume={59},
   date={2006},
   number={3},
   pages={330--343},
   issn={0010-3640},
   review={\MR{2200258 (2006m:45007a)}},
   doi={10.1002/cpa.20116},
}

\bib{chipot-chlebik-shafrir}{article}{
   author={Chipot, M.},
   author={Chleb{\'{\i}}k, M.},
   author={Fila, M.},
   author={Shafrir, I.},
   title={Existence of positive solutions of a semilinear elliptic equation
   in $\bold R^n_{+}$ with a nonlinear boundary condition},
   journal={J. Math. Anal. Appl.},
   volume={223},
   date={1998},
   number={2},
   pages={429--471},
   issn={0022-247X},
   review={\MR{1629293 (99h:35060)}},
   doi={10.1006/jmaa.1998.5958},
}

\bib{davila-handbook}{article}{
   author={D{\'a}vila, J.},
   title={Singular solutions of semi-linear elliptic problems},
   conference={
      title={Handbook of differential equations: stationary partial
      differential equations. Vol. VI},
   },
   book={
      series={Handb. Differ. Equ.},
      publisher={Elsevier/North-Holland, Amsterdam},
   },
   date={2008},
   pages={83--176},
   review={\MR{2569324 (2010k:35179)}},
   doi={10.1016/S1874-5733(08)80019-8},
}

\bib{ddf}{article}{
   author={D{\'a}vila, Juan},
   author={Dupaigne, Louis},
   author={Farina, Alberto},
   title={Partial regularity of finite Morse index solutions to the
   Lane-Emden equation},
   journal={J. Funct. Anal.},
   volume={261},
   date={2011},
   number={1},
   pages={218--232},
   issn={0022-1236},
   review={\MR{2785899 (2012e:35090)}},
   doi={10.1016/j.jfa.2010.12.028},
}

\bib{davila-dupaigne-montenegro}{article}{
   author={D{\'a}vila, Juan},
   author={Dupaigne, Louis},
   author={Montenegro, Marcelo},
   title={The extremal solution of a boundary reaction problem},
   journal={Commun. Pure Appl. Anal.},
   volume={7},
   date={2008},
   number={4},
   pages={795--817},
   issn={1534-0392},
   doi={10.3934/cpaa.2008.7.795},
}

\bib{ddww}{article}{
   author={D{\'a}vila, Juan},
   author={Dupaigne, Louis},
   author={Kelei Wang},
   author={Juncheng Wei},
   title={  A Monotonicity Formula and a Liouville-type Theorem for a Fourth Order Supercritical Problem },
   journal={Advances in Mathematics, to appear}
}

\bib{dupaigne}{book}{
   author={Dupaigne, Louis},
   title={Stable solutions of elliptic partial differential equations},
   series={Chapman \& Hall/CRC Monographs and Surveys in Pure and Applied
   Mathematics},
   volume={143},
   publisher={Chapman \& Hall/CRC, Boca Raton, FL},
   date={2011},
   pages={xiv+321},
   isbn={978-1-4200-6654-8},
   review={\MR{2779463 (2012i:35002)}},
   doi={10.1201/b10802},
}

\bib{fks}{article}{
   author={Fabes, Eugene B.},
   author={Kenig, Carlos E.},
   author={Serapioni, Raul P.},
   title={The local regularity of solutions of degenerate elliptic
   equations},
   journal={Comm. Partial Differential Equations},
   volume={7},
   date={1982},
   number={1},
   pages={77--116},
   issn={0360-5302},
   review={\MR{643158 (84i:35070)}},
   doi={10.1080/03605308208820218},
}

\bib{fall}{article}{
   author={Fall, Mouhamed Moustapha},
   title={ Semilinear elliptic equations for the fractional Laplacian with Hardy potential},
   review={ http://arxiv.org/abs/1109.5530},
}

\bib{fall-felli}{article}{
   author={Fall, Mouhamed Moustapha},
     author={Felli, Veronica},
   title={Unique continuation property and local asymptotics of solutions to fractional elliptic equations},
   review={\quad http://arxiv.org/abs/1301.5119},
}

\bib{f}{article}{
   author={Farina, Alberto},
   title={On the classification of solutions of the Lane-Emden equation on
   unbounded domains of $\Bbb R^N$},
   language={English, with English and French summaries},
   journal={J. Math. Pures Appl. (9)},
   volume={87},
   date={2007},
   number={5},
   pages={537--561},
   issn={0021-7824},
   review={\MR{2322150 (2008c:35070)}},
   doi={10.1016/j.matpur.2007.03.001},
}

\bib{frank-lenzmann-silvestre}{article}{
author = {Rupert L. Frank},
author = {Enno Lenzmann},
author = {Luis Silvestre},
title = {Uniqueness of radial solutions for the fractional laplacian},
review={\quad http://arxiv.org/abs/1302.2652},
}

\bib{gs}{article}{
   author={Gidas, B.},
   author={Spruck, J.},
   title={A priori bounds for positive solutions of nonlinear elliptic
   equations},
   journal={Comm. Partial Differential Equations},
   volume={6},
   date={1981},
   number={8},
   pages={883--901},
   issn={0360-5302},
   review={\MR{619749 (82h:35033)}},
   doi={10.1080/03605308108820196},
}

\bib{h}{article}{
   author={Harada, Junichi},
   title={Positive solutions to the Laplace equation with
nonlinear boundary conditions on the half space},
   review={preprint},
}

\bib{herbst}{article}{
   author={Herbst, Ira W.},
   title={Spectral theory of the operator
   $(p^{2}+m^{2})^{1/2}-Ze^{2}/r$},
   journal={Comm. Math. Phys.},
   volume={53},
   date={1977},
   number={3},
   pages={285--294},
   issn={0010-3616},
   review={\MR{0436854 (55 \#9790)}},
}

\bib{jl}{article}{
   author={Joseph, D. D.},
   author={Lundgren, T. S.},
   title={Quasilinear Dirichlet problems driven by positive sources},
   journal={Arch. Rational Mech. Anal.},
   volume={49},
   date={1972/73},
   pages={241--269},
   issn={0003-9527},
}

\bib{yy}{article}{
   author={Li, Yan Yan},
   title={Remark on some conformally invariant integral equations: the
   method of moving spheres},
   journal={J. Eur. Math. Soc. (JEMS)},
   volume={6},
   date={2004},
   number={2},
   pages={153--180},
   issn={1435-9855},
   review={\MR{2055032 (2005e:45007)}},
}

\bib{mo}{article}{
   author={Mol{\v{c}}anov, S. A.},
   author={Ostrovski{\u\i}, E.},
   title={Symmetric stable processes as traces of degenerate diffusion
   processes. },
   language={Russian, with English summary},
   journal={Teor. Verojatnost. i Primenen.},
   volume={14},
   date={1969},
   pages={127--130},
   issn={0040-361x},
   review={\MR{0247668 (40 \#931)}},
}


\bib{P-Q-S}{article}{
   author={Pol{\'a}{\v{c}}ik, Peter},
   author={Quittner, Pavol},
   author={Souplet, Philippe},
   title={Singularity and decay estimates in superlinear problems via
   Liouville-type theorems. I. Elliptic equations and systems},
   journal={Duke Math. J.},
   volume={139},
   date={2007},
   number={3},
   pages={555--579},
   issn={0012-7094},
   review={\MR{2350853 (2009b:35131)}},
   doi={10.1215/S0012-7094-07-13935-8},
}


\bib{rs2}{article}{
   author={Ros-Oton, Xavier},
   author={Serra, Joaquim},
   title={The Pohozaev identity for the fractional Laplacian},
   review={ http://arxiv.org/abs/1207.5986},
}

\bib{spitzer}{article}{
   author={Spitzer, Frank},
   title={Some theorems concerning $2$-dimensional Brownian motion},
   journal={Trans. Amer. Math. Soc.},
   volume={87},
   date={1958},
   pages={187--197},
   issn={0002-9947},
   review={\MR{0104296 (21 \#3051)}},
}

\bib{kw}{article}{
   author={Wang, Kelei},
   title={Partial regularity of stable solutions to the supercritical
   equations and its applications},
   journal={Nonlinear Anal.},
   volume={75},
   date={2012},
   number={13},
   pages={5238--5260},
   issn={0362-546X},
   review={\MR{2927586}},
   doi={10.1016/j.na.2012.04.041},
}
\end{biblist}
\end{bibdiv}

\end{document}